\begin{document}
	\newcommand{\bea}{\begin{eqnarray}}
		\newcommand{\eea}{\end{eqnarray}}
	\newcommand{\nn}{\nonumber}
	\newcommand{\bee}{\begin{eqnarray*}}
		\newcommand{\eee}{\end{eqnarray*}}
	\newcommand{\lb}{\label}
	\newcommand{\nii}{\noindent}
	\newcommand{\ii}{\indent}
	\newtheorem{theorem}{Theorem}[section]
	\newtheorem{example}{Example}[section]
	\newtheorem{corollary}{Corollary}[section]
	\newtheorem{definition}{Definition}[section]
	\newtheorem{lemma}{Lemma}[section]
	\newtheorem{remark}{Remark}[section]
	\newtheorem{proposition}{Proposition}[section]
	\numberwithin{equation}{section}
	\renewcommand{\qedsymbol}{\rule{0.7em}{0.7em}}
	\renewcommand{\theequation}{\thesection.\arabic{equation}}
	\renewcommand\bibfont{\fontsize{10}{12}\selectfont}
	\setlength{\bibsep}{0.0pt}
		\title{\bf Weighted past and paired dynamic varentropy measures, their properties, usefulness and inferences** }
	
\author{ Shital {\bf Saha}\thanks {Email address: shitalmath@gmail.com} ~and  Suchandan {\bf  Kayal}\thanks {Email address ~(corresponding author):
		~kayals@nitrkl.ac.in,~~suchandan.kayal@gmail.com
		\newline**It has been accepted on \textbf{Journal of Statistical Computation and Simulation}.}
	\\{\it \small Department of Mathematics, National Institute of
			Technology Rourkela, Rourkela-769008, Odisha, India}}
\date{}
\maketitle
\maketitle
		\begin{center}
\textbf{Abstract}
		\end{center} 
We introduce two uncertainty measures, say weighted past varentropy  (WPVE) and weighted paired dynamic varentropy (WPDVE).  Several  properties of these proposed measures, including their effect under the monotone transformations are studied.  An upper bound of the WPVE using the weighted past Shannon entropy and a lower bound of the WPVE are obtained. Further, the WPVE is studied for the proportional reversed hazard rate (PRHR) models. Upper and lower bounds of the WPDVE are derived. In addition, the non-parametric kernel estimates of the WPVE and WPDVE are proposed. Furthermore, the maximum likelihood estimation technique is employed to estimate WPVE and WPDVE for an exponential population. A numerical simulation is provided to observe the behaviour of the proposed estimates. A real data set is analysed, and then the estimated values of WPVE are obtained. Based on the bootstrap samples generated from the real data set, the performance of the non-parametric and parametric estimators of the WPVE and WPDVE is compared in terms of the absolute bias and mean squared error (MSE). Finally, we have reported an application of WPVE.
 \\
 \\		
		 \textbf{Keywords:} Weighted past varentropy, weighted paired dynamic varentropy, monotone transformation, proportional reversed hazard rates model, non-parametric estimate.\\
		 \\
			\textbf{MSCs:} 94A17; 60E15; 62B10.

\section{Introduction}
Consider a non-negative and absolutely continuous random variable (RV) $Y$. Denote by $g(\cdot)$ the probability density function (PDF) of $Y.$ The information content (IC) and weighted IC of $Y$ are 
\begin{eqnarray}
I(Y)=-\log \big(g(Y)\big) ~~\text{and}~~I^\omega(Y)=-\omega(Y)\log \big(g(Y)\big),
\end{eqnarray}
 respectively, where $\omega(\cdot)>0$ is called the weight function and `log' is a natural logarithm. In literature, $I(Y)$ and $I^{\omega}(Y)$ are also dubbed as the Shannon IC and weighted Shannon IC, respectively. The rationale behind $I(Y)$ can be provided in a discrete scenario, where it signifies the quantity of bits that are fundamentally needed to represent $Y$ through a coding scheme that reduces the average code length.  For details, please refer to \cite{shannon1948mathematical}.  The expectation of $I(Y)$, termed  as Shannon entropy (SE) has been widely studied by many authors (see for example, \cite{hammer2000inequalities}, \cite{kharazmi2021informational}, \cite{saha2024different}) in different fields of research. The SE of $Y$, also known as the differential entropy is defined as
\begin{eqnarray}\label{eq1.2}
\mathcal{H}(Y)=E[I(Y)]=-\int_{0}^{\infty}g(y)\log \big(g(y)\big)dy.
\end{eqnarray}
For a discrete RV $Y$, taking values $y_i$ with respective probabilities $p_i>0,$ $\sum_{i=1}^{n}p_i=1,$  $i=1,\dots,n$,  the SE  is given by
\begin{align}\label{eq1.3}
\mathcal{H}(Y)=-\sum_{i=1}^{n}p_i\log (p_i).
\end{align}
For details,  please refer to \cite{shannon1948mathematical}. Note that the SE measures uncertainty or disorder contained in an RV $Y$. The amount of information and entropy are inter-related. Higher entropy and disorder are correlated with increased information; lower entropy and disorder are correlated with decreased information. Clearly, (\ref{eq1.3}) depends only on the probabilities of occurrence of outcomes. Thus, (\ref{eq1.3}) is not useful in many fields, dealing with experiments where it is required to consider both probabilities and  qualitative characteristic of the events of interest. Thus, for distinguishing the outcomes $y_1,\dots,y_n$ of a goal-directed experiment according to
their importance with respect to a given qualitative characteristic of the system, it is required
to assign numbers $\omega_k> 0$ to each outcome $y_k$. One may choose $\omega_k$, proportional to the importance of the kth outcome. Here, $\omega_k$'s are known as the weights of the outcomes $y_k, ~k = 1,\dots,n$. This type experiment is called as a weighted probabilistic experiment. For such kind of experiments the weighted SE is useful, which is defined as
\begin{eqnarray}\label{eq1.4}
\mathcal{H}^\omega(Y)=-\sum_{i=1}^{n}\omega_ip_i\log( p_i).
\end{eqnarray}
The continuous analogue of (\ref{eq1.4}), known as the weighted SE or weighted differential entropy of the RV $Y$ with weight function $\omega(y)>0,$ is defined as
\begin{eqnarray}\label{eq1.5}
\mathcal{H}^\omega(Y)=-\int_{0}^{\infty}\omega(y)g(y)\log \big(g(y)\big)dy=E[I^\omega(Y)].
\end{eqnarray}
For details, see  \cite{di2007weighted}. Note that $\mathcal{H}^\omega(X)$ in (\ref{eq1.5}) is the expectation of weighted IC.  It is a measurement of the uncertainty and information provided by a probabilistic experiment, which has been used to provide answers to many problems. The SE and weighted SE are used in various fields of areas such as computer science, electrical engineering, behavioural science, environmental science, chemical engineering and in coding theory (see \cite{cover1991elements}). However, there is a discrimination that SE  in (\ref{eq1.2}) is a shift independent measure whenever the weighted SE in (\ref{eq1.5}) is shift dependent. As a result, weighted SE measure is more flexible than SE.

Several researchers grow their interest to study  the behaviour of  the IC which  is useful in  probability, statistics and information theory. The IC concentrates around the SE in higher dimension with the log-concave PDF function, which is studied by \cite{bobkov2011concentration}. Occasionally, the SEs of two RVs have the same value. For example, the SE of the exponential distribution with rate parameter $e$ and uniform distribution in $(0,1)$ are same. In this situation, the idea of the concentration of IC around SE is helpful for analytical explanation.  This concentration can be obtained as the variance of $I(Y)$, which is known as the varentropy (VE). For a non-negative absolutely continuous RV $Y$, the VE (see \cite{fradelizi2016optimal}) is expressed as
\begin{eqnarray}\label{eq1.6}
\mathcal{VE}(Y)=Var[I(Y)]=\int_{0}^{\infty}g(y)[\log \big(g(y)\big)]^{2}dy-\left[\mathcal{H}(Y)\right]^2,
\end{eqnarray} 
 where $\mathcal{H}(Y)$ is the SE of $Y$. Note that $\mathcal{VE}(Y)$ quantifies variability of  $I(Y)$. For a discrete RV $Y$, the VE is given by (see \cite{di2021analysis})
\begin{align}\label{eq1.7}
\mathcal{VE}(Y)=\sum_{i=1}^{n}p_i[\log( p_i)]^2-\left[\sum_{i=1}^{n}p_i\log (p_i)\right]^2.
\end{align} 	
 One of the early appearances of the varentropy is when it was characterised as the ``minimal coding variance" studied by
\cite{kontoyiannis1997second}. Further, \cite{kontoyiannis2013optimal}, used the concept of varentropy as ``dispersion" in source coding in computer science. \cite{maadani2020new} introduced generalised varentropy based on Tsallis entropy and showed that the Tsallis residual varentropy is independent of the age of the systems. \cite{maadani2022varentropy} proposed a method for calculating the varentropy for order statistics and studied some stochastic comparisons. \cite{sharma2023varentropy} introduced the concept of VE in a doubly truncated RV. The authors examined several theoretical properties. \cite{alizadeh2023varentropy} introduced some  non-parametric estimates of the VE with some theoretical properties. They compare the estimates based on the MSEs. 

In survival analysis, the concept of  the residual life is very useful for life testing studies. 
Residual life-based informational measures are also useful for predictive maintenance and decision-making in various fields like reliability engineering, medicine science and finance. \cite{di2021analysis} proposed residual varentropy (RVE) based on  residual lifetime of a system, $Y_t=[Y-t|Y>t]$, $t>0$ with PDF $g_t(y)=\frac{g(y)}{\bar G(t)}$, where $\bar G(t)=P[Y>t]$ represents the reliability function of $Y$. The RVE of $Y_t$ is defined as 
\begin{align}\label{eq1.8}
\mathcal{VE}(Y;t)=Var[I(Y_t)]
= \int_{t}^{\infty}\frac{g(y)}{\bar{G}(t)}\Big(\log\Big(\frac{g(y)}{\bar{G}(t)}\Big)\Big)^2dy-[\mathcal{H}(Y;t)]^2,
\end{align}			
where $\mathcal{H}(Y;t)$ is the residual SE (see \cite{ebrahimi1995new}).  They discussed several mathematical properties and  provided two applications pertaining to the first-passage timings of an Ornstein-Uhlenbeck jump-diffusion process and the proportional hazards model.

The past lifetime occurs when we have failure before a specified inspection time $t>0$. In many situations, it is necessary to measure uncertainty contained in the past lifetime. For example,  in forensic sciences and other related fields, the past lifetimes are used to analyse the right-censored data (see \cite{andersen2012statistical}).  Several researchers studied the uncertainty	for past lifetime in information theory. See, for instance \cite{di2002entropy}, \cite{di2007weighted}, \cite{di2021fractional}, and \cite{saha2023extended}. Recently, \cite{buono2022varentropy} introduced varentropy of the past lifetime. Let $G(t)=P[Y<t]$ be the cumulative distribution function (CDF) of $Y$. The past lifetime of a system is denoted by $Y^*_t=[t-Y|Y\le t].$ The PDF of $Y^*_t$ is $g^*_t(y)=\frac{g(y)}{G(t)}$. The VE of the past lifetime is defined as
\begin{align}\label{eq1.9}
\mathcal{VE^*}(Y;t)= \int_{0}^{t}\frac{g(y)}{ G(t)}\Big(\log\Big(\frac{g(y)}{ G(t)}\Big)\Big)^2dy-[\mathcal{H}^*(Y;t)]^2,
\end{align}	
where $\mathcal{H}^*(Y;t)$ is the past SE (see \cite{di2002entropy}). Note that $\mathcal{VE^*}(Y;t)$ in (\ref{eq1.9}) is the variance of the IC, $I(Y^*_t)=-\log \big(\frac{g(Y)}{G(t)}\big)$.  \cite{raqab2022varentropy} considered past VE and obtained some reliability properties associated with the past VE. \cite{sharma2024stochastic} introduced various theoretical properties of the past VE.

Very recently, \cite{saha2024weighted} proposed weighted varentropy (WVE) for discrete as well as continuous RVs, and examined some properties. The authors also studied WVE of the coherent systems. They further proposed weighted residual varentropy (WRVE). The WVE is  the variance of the weighted IC, $I^\omega(Y)=-\omega(Y)\log (g(Y))$. \cite{saha2024weighted} showed that the WVE gives better result than the VE for different distributions. For a discrete RV $Y$, the WVE is given by
\begin{align}
\mathcal{VE}^\omega(Y)=\sum_{i=1}^{n}\omega^2_ip_i[\log (p_i)]^2-\left[\sum_{i=1}^{n}\omega_ip_i\log( p_i)\right]^2,
\end{align} 		
where $p_i$'s and $\omega_i$'s are probability mass function and weight function corresponding to the event $Y=y_i,$ for $i=1,\dots,n$. Analogously, the WVE of $Y$ is defined as 
\begin{align}
\mathcal{VE}^\omega(Y)=\int_{0}^{\infty}\omega^2(y)g(y)\Big(\log \big(g(y)\big)\Big)^2dy-[\mathcal{H}^\omega(Y)]^2,
\end{align}
where $\mathcal{H}^\omega(Y)$ is the  weighted SE of $Y$ (see \cite{di2007weighted}). It is clear that VE as well as WVE are non-negative. For uniform distribution, the value of VE is zero but WVE is non-zero. The WRVE of $Y_t$ is defined as (see \cite{saha2024weighted})
\begin{align}\label{eq1.7}
\mathcal{VE}^\omega(Y;t)=Var[IC^\omega(Y_t)]
= \int_{t}^{\infty}\frac{g(y)}{\bar G(t)}\bigg(\omega(y)\log\Big(\frac{g(y)}{\bar G(t)}\Big)\bigg)^2dy-[\mathcal{H}^\omega(Y;t)]^2,
\end{align}			
where $\mathcal{H}^w(Y;t)$ is the weighted residual SE (see \cite{di2007weighted}) with weight $\omega(y)>0$. The authors have proposed non-parametric estimator of the WRVE. Further, they have illustrated the proposed estimate using a simulation study and two real data sets.  In this communication, motivated by the aforementioned findings and the usefulness of the weight function in probabilistic  experiment, we introduce weighted past varentropy (WPVE) and study its various properties. In the following, the key contributions of this paper are discussed.
\begin{itemize}
	\item In Section \ref{sec2}, we propose weighted varentropy for the past lifetime. This measure is called as the WPVE. The proposed measure is a generalisation of  the varentropy, weighted varentropy and past varentropy. The WPVE is studied under a monotonically transformed RVs. Lower and upper bounds of the WPVE are obtained. Further, in Section \ref{sec3} the WPVE  is studied for the PRHR model.
	
	\item In Section \ref{sec4}, the concepts of weighted paired dynamic entropy (WPDE) and WPDVE are introduced. Several bounds of the WPDVE are obtained. The effect of the WPDVE under an affine transformation is examined. 
	
	\item In Section \ref{sec5}, the kernel-based non-parametric estimates of the WPVE and WPDVE are proposed. To see their performance, a Monte Carlo simulation study is carried out. For both WPVE and WPDVE, we have further considered parametric estimation assuming that the data are taken from an exponential population. Average daily wind speeds data set is considered and analysed. It is observed that the parametric estimates have superior performance over the non-parametric estimates in terms of the absolute bias (AB) and MSE values. 
	
	\item In Section \ref{sec6}, an application of WPVE  related to the  reliability engineering using coherent systems is provided. Finally, the conclusion of the work has been discussed in Section \ref{sec7}.
\end{itemize}

Henceforth, we assume that the RVs are non-negative and absolutely continuous unless it is mentioned. Further, `increasing' and `decreasing' are used in wide sense. The differentiation, integration and expectation always exist wherever they are used.

\section{Weighted past varentropy}\label{sec2}
In this section, we introduce an information measure by taking the variance of the weighted IC for the past lifetime $Y^*_t$. The weighted IC of $Y^*_t$ is $I^\omega(Y^*_t)=-\omega(y)\log(\frac{g(y)}{G(t)}),$ where $\omega(y)>0$ is the weight function.

\begin{definition}
Let $Y$ have the  CDF $G(\cdot)$ and PDF $g(\cdot)$. The WPVE is defined by 
 \begin{align}\label{eq2.1}
 \mathcal{\overline{VE}^\omega}(Y;t)=Var[I^\omega(Y^*_t)]
 = \int_{0}^{t}\frac{g(y)}{ G(t)}\bigg(\omega(y)\log\Big(\frac{g(y)}{ G(t)}\Big)\bigg)^2dy-[\overline{\mathcal{H}}^\omega(Y;t)]^2,
 \end{align}
 where $\overline{\mathcal{H}}^\omega(Y;t)$ is the weighted past SE (see  \cite{di2007weighted}).
\end{definition}	

\begin{remark}
	The WPVE can be considered as a generalisation of the weighted VE (see \cite{saha2024weighted}) and past VE (see \cite{buono2022varentropy}). In particular, (\ref{eq2.1}) reduces to the past VE when $\omega(y)=1$, while  (\ref{eq2.1}) becomes the weighted VE for $t\rightarrow\infty.$ Further, when $\omega(y)=1$ and $t\rightarrow\infty$, then the WPVE coincides with the VE (\cite{fradelizi2016optimal}). 
\end{remark}
	
For the weight function $\omega(y)=y,$ (\ref{eq2.1}) can be written as 
\begin{align}\label{eq2.2}
\mathcal{\overline{VE}}^y(Y;t)=E[(\psi_{1}(Y))^2|Y\le t]-2\Lambda^*(t)\overline{\mathcal{H}}^{y^2}(Y;t)-(\Lambda^*(t))^2E[Y^2|Y\le t]-[\overline{\mathcal{H}}^{y}(Y;t)]^2,
\end{align}
where $\psi_{1}(y)=y\log(g(y))$. In (\ref{eq2.2}),  $\Lambda^*(t)=-\log (G(t))$ is the cumulative reversed hazard rate (CRHR)  function and $\overline{\mathcal{H}}^{y^2}(Y;t)$ is the weighted past SE with weight $y^2$. Now, we obtain the closed form expression of WPVE.
\begin{example}~~\label{ex2.1}
\begin{itemize}
\item[$(i)$] Suppose the uniform RV $Y$ has the CDF $G(y)=\frac{y-a}{b-a}$, $y\in[a,b]$. Then, the WPVE of $Y$ is
\begin{align*}
\mathcal{\overline{VE}}^{y}(Y;t)=\frac{1}{12}\{\log(t-a)\}^2\{4(t^2+at+a^2)-3(t+a)^2\},
\end{align*}
plotted in Figure \ref{fig1}$(a)$ to see its behaviour with respect to $t>0.$
\item[$(ii)$] For Pareto-I  distribution with CDF $G(y)=1-y^{-\alpha},~y\ge1,~\alpha>0$, the WPVE is obtained as
\begin{align*}
\mathcal{\overline{VE}}^{y}(Y;t)=&\frac{\psi_2(t;\alpha)t^{2-\alpha}}{(2-\alpha)^3}\bigg[\Big\{1+\alpha+\log\big(\psi^{2-\alpha}_2(t;\alpha)\big)\Big\}^2+(1+\alpha)^2\Big\{\log(t^{2-\alpha})-1\Big\}^2\\
&-\log\big(t^{2(1+\alpha)(2-\alpha)^2}\big)\bigg]-\frac{\psi^2_2(t;\alpha)t^{2-2\alpha}}{(1-\alpha)^2}\bigg\{\log\big(\psi^{1-\alpha}_2(t;\alpha)\big)-\log\big(t^{(1-\alpha)(1-\alpha^2)}\big)\\&+\alpha^2-1\bigg\}^2,
\end{align*}
where $ \psi_{2}(t;\alpha)=\frac{\alpha}{1-t^{-\alpha}}.$ For graphical plot of the WPVE of Pareto-I  distribution with respect to $t$, see Figure \ref{fig1}$(b)$.
\item[$(iii)$] Consider an exponential RV $Y$ with CDF $G(y)=1-e^{-\lambda y}, ~y>0,~\lambda>0$. Then, the WPVE is obtained as
\begin{align*}
\mathcal{\overline{VE}}^{y}(Y;t)=&\psi_3(t;\lambda)\bigg[\Big\{\log\big(\psi_3(t;\lambda)\big)-3\Big\}^2\Big\{\frac{2}{\lambda^3}\big(1-e^{-\lambda t}(1+\lambda t)\big)-\frac{t^2e^{-\lambda t}}{\lambda}\Big\}\\
&+\frac{t^2e^{-\lambda t}}{\lambda}\Big\{2\lambda t \log\big(\psi_3(t;\lambda)\big)-4\lambda t-\lambda^2 t^2-3\Big\}+\frac{12}{\lambda^3}\Big\{1-e^{-\lambda t}(1+\lambda t)\Big\}\bigg]\\
&-\frac{1}{\lambda^2(1-e^{-\lambda t})^2}\bigg[\Big\{1-e^{-\lambda t}(1+\lambda t)\Big\}\Big\{\log\big(\psi_3(t;\lambda)\big)-2\Big\}+\lambda^2 t^2e^{-\lambda t}\bigg]^2,
\end{align*}
where $\psi_{3}(t;\lambda)=\frac{\lambda}{1-e^{-\lambda t}}.$ The plot of the WPVE with respect to $t$ is provided in Figure \ref{fig1}$(c)$.
\end{itemize}
\end{example}

\begin{figure}[h!]
		\centering
	\subfigure[]{\label{c1}\includegraphics[height=1.9in]{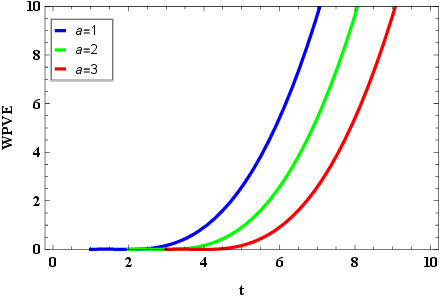}}
	\subfigure[]{\label{c1}\includegraphics[height=1.9in]{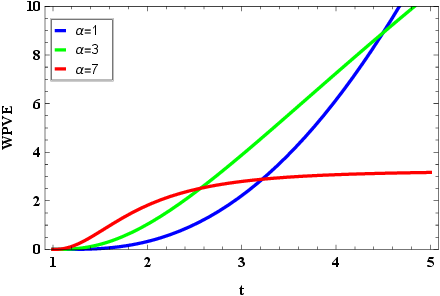}}
	\subfigure[]{\label{c1}\includegraphics[height=1.9in]{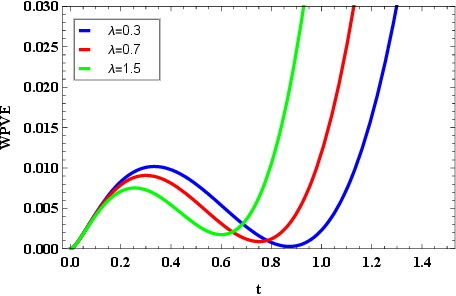}}
		\caption{Graphs for the WPVE of $(a)$ uniform distribution in Example \ref{ex2.1}$(i)$, $(b)$  Pareto-I distribution in Example \ref{ex2.1}$(ii)$, and $(c)$ exponential distribution in Example \ref{ex2.1}$(iii)$.} 
		\label{fig1}
	\end{figure}

Bounds in probability (e.g., Markov's inequality, Chebyshev's inequality) provide limits of the likelihood of events. This is important for understanding the spread and distribution of random variables. In addition, bounds also help in estimating probabilities of information measures, particularly when exact calculations are complex or infeasible.
Below, we obtain an upper bound of the WPVE via weighted past SE and CRHR function. 
 \begin{theorem}\label{th2.1}
 Suppose $Y$ is an RV with PDF $g(\cdot).$ Further, let the PDF satisfy
 \begin{align}\label{eq2.3}
 e^{-(\alpha y+\beta )}\le g(y)\le1,~~y>0,~ \alpha>0,~\beta\ge0.
 \end{align}
Then, for $t>0$
 \begin{align}
 \mathcal{\overline{VE}}^{y}(Y;t)\le \overline{\mathcal{H}}^{\omega_2}(Y;t)-2\Lambda^*(t)E[\alpha Y^3+\beta Y^2|Y\le t]+\Lambda^{*^2}(t)E[Y^2|Y\le t],
 \end{align}
 where $\overline{\mathcal{H}}^{\omega_2}(Y;t)$ is the weighted past SE with weight $\omega_2(y)=\alpha y^3+\beta y^2$.
 \end{theorem}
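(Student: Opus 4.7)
The plan is to derive the upper bound in two main moves. First, I would discard the non-negative subtracted term $[\overline{\mathcal{H}}^y(Y;t)]^2$ in the definition \eqref{eq2.1} to get $\mathcal{\overline{VE}}^y(Y;t) \le \int_0^t y^2 \frac{g(y)}{G(t)}[\log(g(y)/G(t))]^2\,dy$. Second, I would invoke the hypothesis \eqref{eq2.3}: from $e^{-(\alpha y+\beta)} \le g(y) \le 1$ we deduce $-(\alpha y+\beta) \le \log g(y) \le 0$, and hence the pointwise inequality $[\log g(y)]^2 = |\log g(y)|\cdot|\log g(y)| \le (\alpha y+\beta)\,(-\log g(y))$. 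This is the main analytic ingredient of the proof; everything afterwards is algebraic manipulation.

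Next, I would write $\log(g(y)/G(t)) = \log g(y) + \Lambda^*(t)$, expand the square, and apply the pointwise bound only to the pure $[\log g(y)]^2$ piece. After multiplying by $y^2$ this gives $y^2[\log(g(y)/G(t))]^2 \le -\omega_2(y)\log g(y) + 2y^2\Lambda^*(t)\log g(y) + y^2[\Lambda^*(t)]^2$, with $\omega_2(y)=\alpha y^3+\beta y^2$. Integrating against $g(y)/G(t)$ over $[0,t]$ and rewriting each $\log g(y)$ as $\log(g(y)/G(t)) - \Lambda^*(t)$ expresses the first two integrals in terms of the weighted past SE $\overline{\mathcal{H}}^{\omega_2}(Y;t)$ and $\overline{\mathcal{H}}^{y^2}(Y;t)$, together with conditional expectations of $\omega_2(Y)$ and $Y^2$ with scalar $\Lambda^*(t)$ factors. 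Collecting these pieces should produce the stated bound.

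The main obstacle is the algebraic book-keeping in the last step: multiple $\Lambda^*(t)$- and $[\Lambda^*(t)]^2$-contributions arise from the three integrals, and they must cancel and recombine so that the net coefficients are exactly $-2\Lambda^*(t)$ in front of $E[\omega_2(Y)|Y\le t]$ and $+[\Lambda^*(t)]^2$ in front of $E[Y^2|Y\le t]$, while the ``non-log" part of each integral collapses into $\overline{\mathcal{H}}^{\omega_2}(Y;t)$. A useful sanity check along the way is the uniform model of Example \ref{ex2.1}(i), where $\alpha=0$ and $\beta=\log(b-a)$ make the pointwise bound in the first paragraph an equality, so the only slack in the theorem comes from discarding $[\overline{\mathcal{H}}^y(Y;t)]^2$; this both guards against sign errors and calibrates the tightness of the estimate.
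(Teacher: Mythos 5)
Your outline tracks the paper's proof up to the expansion of the square: both arguments discard the non-negative term $[\overline{\mathcal{H}}^{y}(Y;t)]^2$, write $\log(g(y)/G(t))=\log g(y)+\Lambda^*(t)$, expand, and apply the pointwise bound $[\log g(y)]^2\le(\alpha y+\beta)\bigl(-\log g(y)\bigr)$ to the squared-log term. The gap is exactly in the book-keeping you defer with ``collecting these pieces should produce the stated bound'': it does not. Carrying out the substitutions you describe (replacing $\log g(y)$ by $\log(g(y)/G(t))-\Lambda^*(t)$ in each integral, with $\overline{\mathcal{H}}^{\omega}(Y;t)=-E[\omega(Y)\log(g(Y)/G(t))\mid Y\le t]$) yields
\begin{align*}
\mathcal{\overline{VE}}^{y}(Y;t)\le \overline{\mathcal{H}}^{\omega_2}(Y;t)+\Lambda^*(t)E[\omega_2(Y)\mid Y\le t]-2\Lambda^*(t)\overline{\mathcal{H}}^{y^2}(Y;t)-\Lambda^{*^2}(t)E[Y^2\mid Y\le t],
\end{align*}
which differs from the claimed bound: a term $-2\Lambda^*(t)\overline{\mathcal{H}}^{y^2}(Y;t)$ appears that is absent from the statement, and the coefficients of $E[\omega_2(Y)\mid Y\le t]$ and $E[Y^2\mid Y\le t]$ have the wrong signs. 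Worse, the discrepancy points the wrong way: using $-\log g(y)\le\alpha y+\beta$ one checks that $\overline{\mathcal{H}}^{y^2}(Y;t)\le E[\omega_2(Y)\mid Y\le t]-\Lambda^*(t)E[Y^2\mid Y\le t]$, so the display above exceeds the theorem's right-hand side by at least $\Lambda^*(t)E[\omega_2(Y)\mid Y\le t]>0$. Hence no further legitimate estimate can squeeze your bound down to the stated one; the final ``collecting'' step is not a routine cancellation but the point where the argument fails.

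The structural difference from the paper is the cross term. The paper does \emph{not} keep $2\Lambda^*(t)\int_0^t y^2\frac{g(y)}{G(t)}\log g(y)\,dy$ intact and re-express it through $\overline{\mathcal{H}}^{y^2}$; it replaces $\log g(y)$ by $-(\alpha y+\beta)$ inside that integral as well, which is precisely how the middle term $-2\Lambda^*(t)E[\alpha Y^3+\beta Y^2\mid Y\le t]$ of the statement arises. Your proposal omits this step, and without it the target cannot be reached. You should be aware, however, that this substitution in the cross term requires $\log g(y)\le-(\alpha y+\beta)$, the reverse of what hypothesis (\ref{eq2.3}) gives, and the paper also identifies $-\int_0^t\omega_2(y)\frac{g(y)}{G(t)}\log g(y)\,dy$ with $\overline{\mathcal{H}}^{\omega_2}(Y;t)$ even though the latter is defined with $\log(g(y)/G(t))$; so the published proof has its own unjustified steps. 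But as submitted, your proposal does not establish the theorem: the decisive algebra is asserted rather than done, and doing it shows it does not close.
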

 \begin{proof}
 For $\omega(y)=y$, from  (\ref{eq2.1}) we obtain
 \begin{align}\label{eq2.4}
  \mathcal{\overline{VE}}^{y}(Y;t)=\int_{0}^{t}\frac{g(y)}{ G(t)}\bigg(y\log\left(\frac{g(y)}{ G(t)}\right)\bigg)^2dy-[\overline{\mathcal{H}}^{y}(Y;t)]^2
  \le \int_{0}^{t}y^2\frac{g(y)}{ G(t)}\left[\log (g(y))+\Lambda^*(t)\right]^2dy.
  \end{align}
  Further, 
  \begin{align*}
  \int_{0}^{t}y^2\frac{g(y)}{ G(t)}[\log (g(y))+\Lambda^*(t)]^2dy&=\int_{0}^{t}y^2\frac{g(y)}{ G(t)}[\log (g(y))]^2dy+2\int_{0}^{t}y^2\frac{g(y)}{ G(t)}\log (g(y)) \Lambda^*(t)dy\\
  &+\int_{0}^{t}y^2\frac{g(y)}{ G(t)}[\Lambda^*(t)]^2dy\\
  &\le -\int_{0}^{t}y^2(\alpha y+\beta)\frac{g(y)}{ G(t)}\log (g(y))dy+[\Lambda^*(t)]^2\int_{0}^{t}y^2\frac{g(y)}{ G(t)}dy\\
  &-2\int_{0}^{t}y^2(\alpha y+\beta)\frac{g(y)}{ G(t)} \Lambda^*(t)dy\\
  &=\overline{\mathcal{H}}^{\omega_2}(Y;t)-2\Lambda^*(t)E[(\alpha Y^3+\beta Y^2)|Y\le t]\\
 & +\Lambda^{*^2}(t)E[Y^2|Y\le t].
 \end{align*}
 \end{proof}
 
 In the following example, we show that Lomax distribution satisfies the condition in (\ref{eq2.3}).
 
 \begin{example}
 Consider the Lomax distribution with CDF $G(x)=1-(1+\frac{x}{\delta})^{-\gamma},~x>0,~\delta>0,$ and $\gamma>0$.  The inequality in (\ref{eq2.3}) can be easily checked from the graphical plots (see Figure \ref{fig2}). 
 \end{example}

\begin{figure}[h!]
		\centering
		\includegraphics[width=13.5cm,height=8cm]{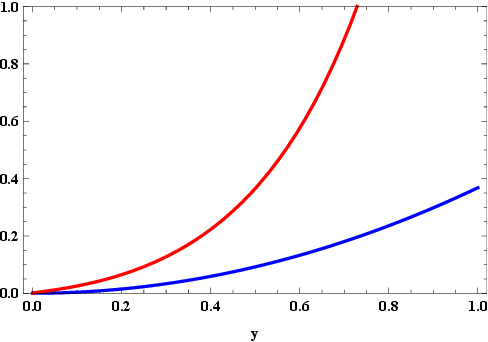}
		\caption{Graphical plots of $e^{-(\alpha x+\beta)}$ (blue colour) and $g(x)=\frac{\gamma}{\delta}(1+x/\delta)^{-(\gamma+1)}$ (red colour) for $\alpha=2, ~\beta=1,~ \delta=1,$ and $\gamma=3$. To capture the full support $x\in(0,\infty)$, we take $x=-\log y$, where $y\in(0,1).$ }
	\label{fig2}	
	\end{figure}

 Variance of past lifetime (VPL) is an important concept used in reliability theory and survival analysis. It provides valuable information about the variability of the past lifetime of a system or individual, given that it has already stopped working, inspected at time $t>0$. By studying the variance of past lifetime, organizations can develop maintenance strategies that minimize costs and avoid failures. In warranty analysis, it helps in determining the likelihood and variability of failures during the warranty period, aiding in better warranty design.
 For an RV $Y$,  the VPL is defined as 
 \begin{align}
 \sigma^2(t)=Var[t-Y|Y\le t]=\frac{2}{G(t)}\int_{0}^{t}du\int_{u}^{t}g(z)dz-[\mathcal{M}(t)]^2,
 \end{align} 
 where $\mathcal{M}(t)=\int_{0}^{t}\frac{G(y)}{G(t)}dy$ is called the mean past lifetime (MPL). For detailed study on VPL, please see \cite{mahdy2016further}.
 Now, we obtain a lower bound of the WPVE in terms of the  VPL.
\begin{theorem}\label{th2.2}
We have
	\begin{eqnarray}
	\mathcal{\overline{VE}}^\omega(Y;t)\geq \sigma^2(t)\{1+E[-\zeta_t(Y_t)\log (g_t(Y_t))]+E[Y_t\zeta^{'}_t(Y_t)]\}^2,
	\end{eqnarray}
	where $\zeta_t(\cdot)$ can be determined from
	\begin{align}\label{eq2.8}
		\sigma^2(t)\zeta_t(y)g_t(y)=\int_{0}^{y}(\mathcal{M}(t)-u)g_t(u)du, ~y>0.
	\end{align}
	
\end{theorem}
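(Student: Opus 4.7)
The plan is to combine the Cauchy--Schwarz inequality with a Stein-type covariance identity generated by the kernel $\zeta_{t}$. For the proof I take the weight $\omega(y)=y$, which is consistent with the appearance of the term $E[Y_{t}\zeta_{t}'(Y_{t})]$ on the right-hand side.

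First, applying Cauchy--Schwarz to the pair $I^{\omega}(Y^{*}_{t})$ and the centred past lifetime $Y^{*}_{t}-E[Y^{*}_{t}]$, whose variance equals $\sigma^{2}(t)$, gives
\begin{equation*}
\operatorname{Var}[I^{\omega}(Y^{*}_{t})]\;\ge\;\frac{\bigl(\operatorname{Cov}(I^{\omega}(Y^{*}_{t}),Y^{*}_{t})\bigr)^{2}}{\sigma^{2}(t)}.
\end{equation*}
The task then reduces to identifying this covariance with $\sigma^{2}(t)\bigl(1+E[-\zeta_{t}(Y_{t})\log g_{t}(Y_{t})]+E[Y_{t}\zeta_{t}'(Y_{t})]\bigr)$, after which squaring and inserting into the Cauchy--Schwarz bound yields the claim.

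To evaluate the covariance, I would differentiate the defining relation~\eqref{eq2.8} to obtain the Stein-type identity $[\sigma^{2}(t)\zeta_{t}(y)g_{t}(y)]'=(\mathcal{M}(t)-y)g_{t}(y)$, and use it to perform an integration by parts on $\int_{0}^{t}I^{\omega}(y)(y-E[Y^{*}_{t}])g_{t}(y)\,dy$; the construction of $\zeta_{t}$ ensures that the boundary contributions collapse, leaving an integrand proportional to $\zeta_{t}(y)(I^{\omega})'(y)g_{t}(y)$. Writing $(I^{\omega})'(y)=-\log g_{t}(y)-y\,g'(y)/g(y)$ immediately yields the term $E[-\zeta_{t}(Y_{t})\log g_{t}(Y_{t})]$, and a second integration by parts applied to the piece involving $g'/g$ transfers the derivative onto $y\zeta_{t}(y)$, producing $E[Y_{t}\zeta_{t}'(Y_{t})]$ together with a residual $E[\zeta_{t}(Y_{t})]$. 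A Fubini interchange in~\eqref{eq2.8}, combined with the definitions of $\mathcal{M}(t)$ and $\sigma^{2}(t)$, then identifies this residual with the constant that accounts for the leading ``$1$''.

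The principal obstacle will be the careful bookkeeping of boundary contributions in the two successive integrations by parts and the verification, via the Fubini interchange, that the residual expectation generated at the end is exactly responsible for the ``$1$'' inside the bracket. Once these pieces are in hand, assembling the covariance and substituting into the Cauchy--Schwarz bound immediately gives the stated inequality.
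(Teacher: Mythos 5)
Your argument is correct and is essentially the paper's: the paper invokes the Cacoullos--Papathanasiou variance lower bound $\operatorname{Var}[\mathcal{I}(Y)]\ge\sigma^{2}\left(E[\eta(Y)\mathcal{I}'(Y)]\right)^{2}$ as a cited result and then uses the differentiated form of \eqref{eq2.8} to reduce $E[\zeta_t(Y_t)Y_tg_t'(Y_t)/g_t(Y_t)]$ to $-1-E[Y_t\zeta_t'(Y_t)]$, whereas you rederive that same bound from Cauchy--Schwarz plus the Stein-type covariance identity and recover the leading ``$1$'' as $E[\zeta_t(Y_t)]=1$ via Fubini. These are equivalent routes through the same identity, so no gap.
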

\begin{proof}
	Let $Y$ be an RV with PDF $g(\cdot)$, mean $m,$ and variance $\sigma^2$. Then,  
	\begin{eqnarray}\label{eq2.9}
	Var[\mathcal{I}(Y)]\geq \sigma^2(E[\eta(Y)\mathcal{I}^{'}(Y)])^2,
	\end{eqnarray}
	where $\eta(.)$ can be obtained using  $\int_{0}^{y}(m-u)g(u)du=\sigma^2\eta(y)g(y)$ (see \cite{cacoullos1989characterizations}). For proving the required result, we consider $Y_t$ as a reference RV with $\mathcal{I}(y)=I^{y}(y)=-y\log (g(y))$. From (\ref{eq2.9}), we get
	\begin{eqnarray}\label{eq2.10}
		Var[-Y_t\log (g_t(Y_t))]&\geq& \sigma^2(t)\bigg\{E\big[\zeta_t(Y_t)\big(-Y_t\log (g_t(Y_t))\big)'\big]\bigg\}^2\nonumber\\
		&=&\sigma^2(t)\bigg\{E[-\zeta_t(Y_t)\log (g_t(Y_t))]-E\bigg[\zeta_t(Y_t)Y_t\frac{g^{'}_t(Y_t)}{g_t(Y_t)}\bigg]\bigg\}^2.
		\end{eqnarray}
		Further,
		\begin{eqnarray}\label{eq2.11}
		E\bigg[\zeta_t(Y_t)Y_t\frac{g^{'}_t(Y_t)}{g_t(Y_t)}\bigg]= E\bigg[Y_t\bigg(\frac{\mathcal{M}(t)-Y_t}{\sigma^2(t)}-\zeta^{'}_t(Y_t)\bigg)\bigg]= -1-E[Y_t\zeta^{'}_t(Y_t)].
		\end{eqnarray}
		Using (\ref{eq2.11}) in (\ref{eq2.10}), the required result can be easily obtained. 
\end{proof}

Next, we present a corollary. Its proof readily follows from Theorem \ref{th2.2}, and thus it is omitted.
\begin{corollary}~
	\begin{itemize}
		\item[(i)] Suppose $\zeta_t(y)$ is increasing function in $y>0$. Then, 
		$$\overline{\mathcal{VE}}^\omega(Y;t)\geq\sigma^2(t)\big(E[\zeta_t(Y_t)\log (g_t(Y_t))]\big)^2.$$
		\item[(ii)] Let $g_t(y)\leq1.$ Then,
		$$\overline{\mathcal{VE}}^\omega(Y;t)\geq\sigma^2(t)\big(E[Y_t\zeta^{'}_t(Y_t)]\big)^2.$$
	\end{itemize}
\end{corollary}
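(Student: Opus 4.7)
The plan is to deduce both inequalities as immediate consequences of Theorem \ref{th2.2} via a sign analysis of the two expectations appearing inside the squared bracket. For bookkeeping, write $A = E[-\zeta_t(Y_t)\log g_t(Y_t)]$ and $B = E[Y_t\zeta'_t(Y_t)]$, so that Theorem \ref{th2.2} reads $\overline{\mathcal{VE}}^\omega(Y;t) \ge \sigma^2(t)(1+A+B)^2$. Part (i) then amounts to showing $(1+A+B)^2 \ge A^2$, and part (ii) to showing $(1+A+B)^2 \ge B^2$.

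The first common step is to establish that $\zeta_t(y) \ge 0$ on $[0,t]$. To this end I would read (\ref{eq2.8}) as $\sigma^2(t)\zeta_t(y)g_t(y) = F(y)$ with $F(y) := \int_0^y(\mathcal{M}(t)-u)g_t(u)\,du$, and then check $F(0)=0$, $F'(y)=(\mathcal{M}(t)-y)g_t(y)$, and $F(t) = \mathcal{M}(t) - E[Y_t] = 0$. Thus $F$ is unimodal and non-negative on $[0,t]$, so $\zeta_t\ge 0$. With this in hand, in part (i) the hypothesis that $\zeta_t$ is increasing gives $\zeta'_t\ge 0$ and hence $B\ge 0$; in part (ii) the hypothesis $g_t\le 1$ gives $-\log g_t\ge 0$, which combined with $\zeta_t\ge 0$ yields $-\zeta_t(Y_t)\log g_t(Y_t)\ge 0$ pointwise and hence $A\ge 0$. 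In each case the positive summand $1$ plus the second non-negative expectation dominates the remaining term in sign, so the surplus can be absorbed inside the squared bracket and the claimed lower bound emerges.

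The main obstacle, and the reason the author dismisses the proof as ``readily follows'', is the careful handling of signs when peeling off a term from inside a square: passing from $(1+A+B)^2$ to $A^2$ (respectively $B^2$) relies on the retained summand sharing sign with the full expression $1+A+B$. The non-negativity of $\zeta_t$ together with the two hypotheses supplies exactly the sign pattern needed for this reduction, after which the result is just a direct substitution into Theorem \ref{th2.2}.
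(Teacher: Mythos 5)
The paper gives no proof to compare against --- it dismisses the corollary with ``readily follows from Theorem \ref{th2.2}'' --- so your attempt must be judged on its own terms, and it contains a genuine gap at the decisive step. Writing $A=E[-\zeta_t(Y_t)\log g_t(Y_t)]$ and $B=E[Y_t\zeta'_t(Y_t)]$ as you do, Theorem \ref{th2.2} gives the lower bound $\sigma^2(t)(1+A+B)^2$, and each part reduces to an inequality of the form $(u+v)^2\ge u^2$, where $u$ is the retained expectation and $v>0$ is the discarded chunk. But $(u+v)^2\ge u^2$ with $v>0$ is equivalent to $2u+v\ge 0$: it requires a lower bound on the \emph{retained} term $u$, whereas in each part your hypothesis controls only the sign of the \emph{discarded} term. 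In part (i) the hypothesis yields $B\ge 0$, so $v=1+B\ge 1$, but the retained term is $u=A$, whose sign is left unrestricted; taking $A=-(1+B)$ gives $(1+A+B)^2=0<A^2=(1+B)^2$, so the implication fails already at the level of real numbers. Symmetrically, in part (ii) you obtain $A\ge 0$, hence $v=1+A\ge 1$, but the retained term $u=B$ is uncontrolled. In short, the ``sign pattern'' you invoke is established for the wrong term in each case: $A\ge 0$ is exactly what the hypothesis of part (ii) delivers (via $g_t\le 1$ and $\zeta_t\ge 0$) but is what part (i) needs, and $B\ge 0$ is what the hypothesis of part (i) delivers but is what part (ii) needs.

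Your preliminary step showing $\zeta_t\ge 0$ is correct and necessary (the standard Cacoullos argument: the function $F$ vanishes at both endpoints of the support, increases below the mean and decreases above it, hence is non-negative), modulo the paper's own notational confusion between $g_t$, $g^*_t$ and the mean $\mathcal{M}(t)$. The honest conclusion is that each part of the corollary follows from Theorem \ref{th2.2} only if \emph{both} hypotheses hold simultaneously, in which case $A\ge 0$ and $B\ge 0$ give $1+A+B\ge\max\{A,B\}\ge 0$ and hence both stated bounds at once; under either hypothesis alone the term-dropping inside the square is not justified, and your write-up does not close this gap.
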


Sometimes, it is hard to evaluate the closed-form expression of the WPVE for a transformed RV. The following theorem is useful to obtain the WPVE of a new distribution constructed using a monotone transformation. Note that the monotone transformations are useful tools that preserve entropy or information of an RV in the field information theory. 

\begin{theorem}\label{th2.3}
Let $Y$ be an RV and $X=\psi(Y)$, where $\psi$ is a strictly monotonic, continuous and differentiable function. Then, 
\begin{equation}\label{eq3.20}
		\overline{\mathcal{VE}}^x(X;t)=\left\{
		\begin{array}{ll}
			 \overline{\mathcal{VE}}^{\psi}(Y;\psi^{-1}(t)) -2\overline {\mathcal{H}}^{\psi}(Y;\psi^{-1}(t))E[\gamma_1(Y)|Y\leq\psi^{-1}(t)]\\
			+Var[\gamma_1(Y)|Y\leq\psi^{-1}(t)]-2E\Big[\psi(Y)\gamma_1(Y)\log\Big( \frac{g(Y)}{G(\psi^{-1}(t))}\Big)\Big|Y\leq\psi^{-1}(t)\Big],\\~	
			if~\psi~ is ~strictly~ increasing;
			\\
			\\
			{\mathcal{VE}}^{\psi}(Y;\psi^{-1}(t)) -2\mathcal{H}^{\psi}(Y;\psi^{-1}(t))E[\gamma_2(Y)|Y>\psi^{-1}(t)]\\
			+Var[\gamma_2(Y)|Y>\psi^{-1}(t)]-2E\Big[\psi(Y)\gamma_2(Y) \log\Big( \frac{g(Y)}{G(\psi^{-1}(t))}\Big)\Big|Y>\psi^{-1}(t)\Big], \\~if~ \psi~ is ~strictly~ decreasing,
		\end{array}
		\right.
	\end{equation}
	where $\gamma_1(y)=\psi(y)\log(\psi'(y))$,  $\gamma_2(y)=\psi(y)\log(-\psi'(y))$,  $\mathcal{H}^{\psi}(Y;t)=-\int_{t}^{\infty}\psi(y)\frac{g(y)}{\bar{G}(t)}\log \big(\frac{g(y)}{\bar{G}(t)}\big)dy$, $\mathcal{VE}^{\psi}(Y;t)=\int_{t}^{\infty}\psi^2(y)\frac{g(y)}{\bar{G}(t)}\left(\big(\frac{g(y)}{\bar{G}(t)}\big)\right)^2dy-[\mathcal{H}^{\psi}(Y;t)]^2$,
	$\overline {\mathcal{H}}^{\psi}(Y;t)=-\int_{0}^{t}\psi(y)\frac{g(y)}{{G}(t)}\log \big(\frac{g(y)}{{G}(t)}\big)dy$, 
	and
	$\overline{\mathcal{VE}}^{\psi}(Y;t)=\int_{0}^{t}\psi^2(y)\frac{g(y)}{{G}(t)}\Big(\log \big(\frac{g(y)}{{G}(t)}\big)\Big)^2dy-[\overline {\mathcal{H}}^{\psi}(Y;t)]^2$.
\end{theorem}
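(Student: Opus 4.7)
The plan is to tackle the two cases by a direct change of variables inside the defining integral of $\overline{\mathcal{VE}}^x(X;t)$, followed by algebraic expansion of the squared logarithm, and then to do the same for $\overline{\mathcal{H}}^x(X;t)$ and combine the two.

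For the strictly increasing case, I would first note that $X=\psi(Y)$ has CDF $F(x)=G(\psi^{-1}(x))$ and PDF $f(x)=g(\psi^{-1}(x))/\psi'(\psi^{-1}(x))$. Substituting $x=\psi(y)$ in the definition of $\overline{\mathcal{VE}}^x(X;t)$ transforms the range $x\in(0,t)$ into $y\in(0,\psi^{-1}(t))$, and the Jacobian $\psi'(y)\,dy$ cancels the $\psi'(y)$ in the denominator of $f(\psi(y))$. The algebraic heart of the argument is then the identity
\[
\log\frac{f(\psi(y))}{F(t)}=\log\frac{g(y)}{G(\psi^{-1}(t))}-\log\psi'(y),
\]
whose square expands into three integrands. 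The first reproduces $\overline{\mathcal{VE}}^{\psi}(Y;\psi^{-1}(t))+[\overline{\mathcal{H}}^{\psi}(Y;\psi^{-1}(t))]^2$; the second is the cross term $-2E[\psi(Y)\gamma_1(Y)\log(g(Y)/G(\psi^{-1}(t)))\mid Y\leq\psi^{-1}(t)]$, after invoking $\psi(y)\log\psi'(y)=\gamma_1(y)$; the third yields $E[\gamma_1^2(Y)\mid Y\leq\psi^{-1}(t)]$.

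I would then apply the same substitution to $\overline{\mathcal{H}}^x(X;t)$, obtaining $\overline{\mathcal{H}}^x(X;t)=\overline{\mathcal{H}}^{\psi}(Y;\psi^{-1}(t))+E[\gamma_1(Y)\mid Y\leq\psi^{-1}(t)]$. Squaring this and subtracting, the $[\overline{\mathcal{H}}^{\psi}]^2$ contributions cancel, the pair $E[\gamma_1^2\mid\cdot]-(E[\gamma_1\mid\cdot])^2$ collapses to $Var[\gamma_1(Y)\mid Y\leq\psi^{-1}(t)]$, and a residual $-2\overline{\mathcal{H}}^{\psi}(Y;\psi^{-1}(t))E[\gamma_1(Y)\mid Y\leq\psi^{-1}(t)]$ term remains. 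Assembling the four pieces gives the claimed identity for strictly increasing $\psi$.

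The strictly decreasing case proceeds analogously with two modifications. Because $\psi'<0$, the substitution introduces $|\psi'(y)|=-\psi'(y)$, so $\log\psi'(y)$ is replaced throughout by $\log(-\psi'(y))$ and $\gamma_1$ by $\gamma_2$. Moreover $F(t)=P(\psi(Y)\leq t)=\bar G(\psi^{-1}(t))$, and the range $(0,t)$ for $x$ maps to $(\psi^{-1}(t),\infty)$ for $y$, so the post-substitution integrals are of residual rather than past type and therefore produce $\mathcal{VE}^{\psi}(Y;\psi^{-1}(t))$ and $\mathcal{H}^{\psi}(Y;\psi^{-1}(t))$. The main obstacle is sign and limit bookkeeping: one must verify that flipping the integration limits combines with $dx=\psi'(y)\,dy$ to give $(-\psi'(y))\,dy$ on $(\psi^{-1}(t),\infty)$, and that every cross term arising from $\bigl(\log(g(y)/F(t))-\log(-\psi'(y))\bigr)^2$ is tracked through both the squared weighted-entropy piece and the raw second-moment piece before they are recombined into the variance. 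Once this is carried out carefully, the terms organise into exactly the displayed right-hand side.
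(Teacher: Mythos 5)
Your proposal is correct and follows essentially the same route as the paper: a change of variables $x=\psi(y)$ in both $\overline{\mathcal{H}}^{x}(X;t)$ and the second-moment integral, expansion of $\bigl(\log(g(y)/G(\psi^{-1}(t)))-\log\psi'(y)\bigr)^2$ into three terms, and recombination of $E[\gamma_1^2\mid\cdot]-(E[\gamma_1\mid\cdot])^2$ into the conditional variance. The paper proves only the increasing case and declares the decreasing case ``similar,'' so your explicit bookkeeping of $|\psi'|$, the reversed limits, and $F(t)=\bar G(\psi^{-1}(t))$ is a welcome addition rather than a deviation.
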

\begin{proof}
	Assume that $\psi$ is a strictly increasing function. Now,
\begin{align}\label{eq2.13}
\overline {\mathcal{H}}^{x}(X;t)=&-\int_{0}^{\psi^{-1}(t)}\psi(y)\frac{g(y)}{G(\psi^{-1}(t))}\log\Big(\frac{g(y)}{G(\psi^{-1}(t))}(\psi^{'}(y))^{-1}\Big)dy\nonumber\\
=&-\int_{0}^{\psi^{-1}(t)}\psi(y)\frac{g(y)}{G(\psi^{-1}(t))}\log\Big(\frac{g(y)}{G(\psi^{-1}(t))}\Big)dy\nonumber\\
&+\int_{0}^{\psi^{-1}(t)}\psi(y)\frac{g(y)}{G(\psi^{-1}(t))}\log\big(\psi^{'}(y)\big)dy\nonumber\\
=&{\mathcal{H}}^{\psi}(Y;\psi^{-1}(t))+E\big[\psi(Y)\log(\psi^{'})(Y)|Y\le\psi^{-1}(t)\big].
\end{align}	
From (\ref{eq2.1}), we further have
\begin{align}\label{eq2.14*}
\overline{\mathcal{VE}}^{x}(X;t)=\int_{0}^{\psi^{-1}(t)}\psi^2(y)\frac{g(y)}{G(\psi^{-1}(t))}\Big\{\log\Big(\frac{g(y)}{G(\psi^{-1}(t))}(\psi^{'}(y))^{-1}\Big)\Big\}^2dy-\big(\overline {\mathcal{H}}^{x}(X;t)\big)^2.\nonumber\\
\end{align}	
Furthermore,
\begin{align}\label{eq2.15*}
&\int_{0}^{\psi^{-1}(t)}\psi^2(y)\frac{g(y)}{G(\psi^{-1}(t))}\Big\{\log\Big(\frac{g(y)}{G(\psi^{-1}(t))}(\psi^{'}(y))^{-1}\Big)\Big\}^2dy\nonumber\\
&=\int_{0}^{\psi^{-1}(t)}\psi^2(y)\frac{g(y)}{G(\psi^{-1}(t))}\Big\{\log\Big(\frac{g(y)}{G(\psi^{-1}(t))}\Big)-\log\big(\psi^{'}(y)\big)\Big\}^2dy\nonumber\\
&=\int_{0}^{\psi^{-1}(t)}\psi^2(y)\frac{g(y)}{G(\psi^{-1}(t))}\Big\{\log\Big(\frac{g(y)}{G(\psi^{-1}(t))}\Big)\Big\}^2dy\nonumber\\
&+\int_{0}^{\psi^{-1}(t)}\psi^2(y)\frac{g(y)}{G(\psi^{-1}(t))}\Big\{\log\Big(\psi^{-1}(t)\Big)\Big\}^2dy\nonumber\\
&-2\int_{0}^{\psi^{-1}(t)}\psi^2(y)\frac{g(y)}{G(\psi^{-1}(t))}\log\big(\psi^{-1}(t)\big)\log\Big(\frac{g(y)}{G(\psi^{-1}(t))}\Big)dy\nonumber\\
&=\int_{0}^{\psi^{-1}(t)}\psi^2(y)\frac{g(y)}{G(\psi^{-1}(t))}\Big\{\log\Big(\frac{g(y)}{G(\psi^{-1}(t))}\Big)\Big\}^2dy+E\big[\psi^2(Y)\big(\log \psi^{'}(Y)\big)^2|Y\le\psi^{'}(t)\big]\nonumber\\
&-2E\Big[\psi^2(Y)\log(\psi^{'}(Y))\log\Big(\frac{g(Y)}{G(\psi^{'}(t))}\Big)\Big|Y\le\psi^{-1}(t)\Big].
\end{align}		
	
Using (\ref{eq2.13}) and (\ref{eq2.15*}) in (\ref{eq2.14*}), we obtain			
	\begin{align}\label{eq2.15}
		\overline{\mathcal{VE}}^x(X;t)&=\int_{0}^{\psi^{-1}(t)}\psi^2(y)\frac{g(y)}{G(\psi^{-1}(t))}\Big\{\log\Big(\frac{g(y)}{G(\psi^{-1}(t))}\Big)\Big\}^2dy\nonumber\\
		&+E\big[\psi^2(Y)\big(\log \psi^{'}(Y)\big)^2|Y\le\psi^{'}(t)\big]\nonumber\\
		&-2E\Big[\psi^2(Y)\log(\psi^{'}(Y))\log\Big(\frac{g(Y)}{G(\psi^{'}(t))}\Big)\Big|Y\le\psi^{-1}(t)\Big]\nonumber\\
		&-\{\mathcal{H}^{\psi}(Y;\psi^{-1}(t))+E[\psi(Y)\log (\psi^{'}(Y))|Y\le\psi^{-1}(t)]\}^2\nonumber\\
		&=\overline{\mathcal{VE}}^{\psi}(Y;\psi^{-1}(t)) -2\overline {\mathcal{H}}^{\psi}(Y;\psi^{-1}(t))E[\gamma_1(Y)|Y\leq\psi^{-1}(t)]\nonumber\\
		&+Var[\gamma_1(Y)|Y\leq\psi^{-1}(t)]-2E\Big[\psi(Y)\gamma_1(Y)\log\Big( \frac{g(Y)}{G(\psi^{-1}(t))}\Big)\Big|Y\leq\psi^{-1}(t)\Big].
	\end{align}
	Thus, the proof is made for strictly increasing function $\psi$. The proof for strictly decreasing function $\psi$ is similar, and thus it is omitted. This completes the proof.  	 
\end{proof}

To see the validation of the result in Theorem \ref{th2.3}, we consider the following example. 

\begin{example}\label{ex2.3}
Consider exponential RV $Y$ with the CDF $G_1(y)=1-e^{-\lambda y},~y>0$ and $\lambda>0$. Further, let $X=\psi(Y)=Y^2$, which is strictly increasing, continuous and differentiable function. Here, $X$ follows  Weibull distribution with CDF $G_2(y)=1-e^{-\lambda y^{\frac{1}{2}}},~y>0$ and $\lambda>0$. Then, the WPVE of $X=\psi(Y)=Y^2$ is obtained as
\begin{align}\label{eq2.14}
\overline{\mathcal{VE}}^x(X;t)&=\frac{\lambda}{1-e^{-\lambda\sqrt{t}}}\Big[t\sqrt{t}\Big(2t\log\Big(\frac{\lambda}{1-e^{-\lambda\sqrt{t}}}\Big)-\lambda\sqrt{t}-6\Big)e^{-\lambda\sqrt{t}}+\frac{1}{\lambda^5}\Big\{24-\Big(24(1+\lambda\sqrt{t})\nonumber\\
&+12\lambda^2t+4\lambda^3t\sqrt{t}\lambda^4t^2\Big)e^{-\lambda\sqrt{t}}\Big\}\Big]-\phi_{1}(\lambda;t)\Big\{\phi_{1}(\lambda;t)+2E[Y^2\log(2Y)|Y\le\sqrt{t}]\Big\}\nonumber\\
&+\text{Var}[Y^2\log(2Y)|Y\le\sqrt{t}]-2E\Big[Y^4\log(2Y)\log\Big(\frac{\lambda e^{-\lambda Y}}{1-e^{-\lambda\sqrt{t}}}\Big)\Big|Y\le\sqrt{t}\Big],
\end{align}
where $\phi_{1}(\lambda;t)=\frac{1}{\lambda^2(1-e^{-\lambda\sqrt{t}})}\Big[\{2-(\lambda^2t+2\lambda\sqrt{t}+2)e^{-\lambda\sqrt{t}}\}\log\Big(\frac{\lambda}{1-e^{-\lambda\sqrt{t}}}\Big)+(6+6\lambda\sqrt{t}+3\lambda^2t+\lambda^3t\sqrt{t})e^{-\lambda\sqrt{t}}-6\Big]$. We have plotted the graphs of WPVE in (\ref{eq2.14}) in Figures \ref{fig3} $(a)$ and $(b)$ with respect to $t$ (for fixed $\lambda$) and with respect to $\lambda$ (for fixed $t$), respectively.
\end{example}

\begin{figure}[h!]
		\centering
	\subfigure[]{\label{c1}\includegraphics[height=1.9in]{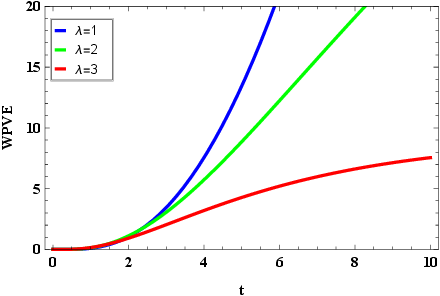}}
	\subfigure[]{\label{c1}\includegraphics[height=1.9in]{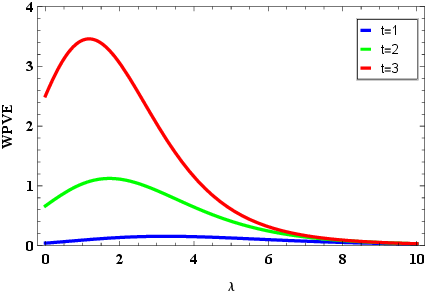}}
		\caption{ Plots of the WPVE for the Weibull distribution in Example \ref{ex2.3} $(a)$ with respect to $t$ (for fixed $\lambda$) and  $(b)$ with respect to $\lambda$ (for fixed $t$).}
	\label{fig3}	
	\end{figure}

Now, we investigate the effect of the WPVE under the affine transformation $X=\alpha Y+\beta$, $\alpha>0$ and $\beta\ge0$.
\begin{corollary}\label{th2.4}
Suppose $X$ is an RV and $X=\alpha Y+\beta$ with $\alpha>0,~\beta\ge0.$ Then, 
\begin{align*}
\overline{\mathcal{VE}}^{x}(X;t)=&\overline{\mathcal{VE}}^{\omega_1}\Big(Y;\frac{t-\beta}{\alpha}\Big)+\big(\log (\alpha)\big)^2\text{Var}\left[\alpha Y+\beta\Big|Y\le \frac{t-\beta}{\alpha}\right]\\&-2\log (a)\left(\overline{\mathcal{H}}^{\omega_1}\Big(Y;\frac{t-\beta}{\alpha}\Big)+\mathcal{\overline{H}}^{\omega_1}\Big(Y;\frac{t-\beta}{\alpha}\Big)E\bigg[\alpha Y+\beta\Big|Y\le \frac{t-\beta}{\alpha}\bigg]\right),
\end{align*}
where $\overline{\mathcal{VE}}^{\omega_1}\left(Y;\frac{t-\beta}{\alpha}\right)$  and  $\mathcal{\overline{H}}^{\omega_1}\left(Y;\frac{t-\beta}{\alpha}\right)$ are the WPVE and  weighted past SE with weight $\omega_1(y)=\alpha y+\beta$, respectively.
\end{corollary}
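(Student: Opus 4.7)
The plan is to apply Theorem~\ref{th2.3} with the affine choice $\psi(y)=\alpha y+\beta$. Because $\alpha>0$, this $\psi$ is strictly increasing, continuous, and differentiable, so the first branch of~(\ref{eq3.20}) is the relevant one, with $\psi^{-1}(t)=(t-\beta)/\alpha$ and $\psi'(y)\equiv\alpha$. It follows that $\gamma_1(y)=\psi(y)\log(\psi'(y))=(\alpha y+\beta)\log\alpha$. Identifying the weight $\omega_1(y)=\alpha y+\beta$ with $\psi(y)$, the two auxiliary objects in Theorem~\ref{th2.3} specialise to $\overline{\mathcal{H}}^{\psi}(Y;(t-\beta)/\alpha)=\overline{\mathcal{H}}^{\omega_1}(Y;(t-\beta)/\alpha)$ and $\overline{\mathcal{VE}}^{\psi}(Y;(t-\beta)/\alpha)=\overline{\mathcal{VE}}^{\omega_1}(Y;(t-\beta)/\alpha)$.

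Next I would substitute these into the right-hand side of~(\ref{eq3.20}) and pull the constant $\log\alpha$ out of every conditional moment involving $\gamma_1$. This immediately gives $E[\gamma_1(Y)\mid Y\le(t-\beta)/\alpha]=(\log\alpha)\,E[\alpha Y+\beta\mid Y\le(t-\beta)/\alpha]$ and $\text{Var}[\gamma_1(Y)\mid Y\le(t-\beta)/\alpha]=(\log\alpha)^2\,\text{Var}[\alpha Y+\beta\mid Y\le(t-\beta)/\alpha]$, which reproduce, respectively, the second summand of the Corollary (after the obvious regrouping of $(\log\alpha)^2$) and the factor appearing alongside one of the $\overline{\mathcal{H}}^{\omega_1}$ terms. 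The leading $\overline{\mathcal{VE}}^{\omega_1}(Y;(t-\beta)/\alpha)$ summand then comes directly from $\overline{\mathcal{VE}}^{\psi}(Y;\psi^{-1}(t))$ via the identification noted above.

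The step I expect to require the most bookkeeping is the mixed term $-2\,E[\psi(Y)\gamma_1(Y)\log(g(Y)/G(\psi^{-1}(t)))\mid Y\le\psi^{-1}(t)]$. Here $\psi(Y)\gamma_1(Y)=(\alpha Y+\beta)^2\log\alpha$, so after extracting $-2\log\alpha$ the residual integral is a weighted past Shannon entropy in which the effective weight is $(\alpha y+\beta)^2$ rather than $\omega_1$; recognising this as the $\overline{\mathcal{H}}^{\omega_1}$ piece in the stated identity is the delicate point of the argument. Once this recasting is done and the two $\overline{\mathcal{H}}^{\omega_1}$ contributions are grouped with the conditional-mean factor $E[\alpha Y+\beta\mid Y\le(t-\beta)/\alpha]$, the conclusion follows by purely algebraic rearrangement. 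As a consistency check, setting $\alpha=1$ makes $\log\alpha=0$ so that every correction term vanishes and the right-hand side collapses to $\overline{\mathcal{VE}}^{y}(Y;t-\beta)$, which is the expected behaviour of the WPVE under a pure translation.
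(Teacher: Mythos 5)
Your proposal takes exactly the route the paper intends: the paper omits the proof of this corollary, stating only that it ``readily follows from Theorem \ref{th2.3}'', and your specialisation $\psi(y)=\alpha y+\beta$, $\gamma_1(y)=(\alpha y+\beta)\log\alpha$ with the constant $\log\alpha$ pulled out of the conditional mean and variance is the correct way to carry that out. The one point you flag as delicate is in fact a genuine discrepancy in the corollary as printed rather than a gap in your argument: the mixed term $-2E\big[\psi(Y)\gamma_1(Y)\log\big(g(Y)/G(\psi^{-1}(t))\big)\,\big|\,Y\le\psi^{-1}(t)\big]$ equals $+2\log(\alpha)$ times a weighted past Shannon entropy with weight $(\alpha y+\beta)^2$, so the first $\overline{\mathcal{H}}^{\omega_1}$ appearing in the corollary's last bracket should really carry the squared weight (and the opposite sign); your consistency check at $\alpha=1$ is unaffected since all correction terms vanish there.
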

\begin{proof}
	We here omit the proof, since it readily follows from Theorem \ref{th2.3}. 
\end{proof}	

Next, an example is considered to illustrate Corollary \ref{th2.4}.

\begin{example}\label{ex2.4}
Consider the CDF $G(y)=1-e^{-y},~y>0$. We take $X=Y+\beta,~\beta>0$. Thus, from Corollary \ref{th2.4}, the WPVE of $X$ is obtained as
\begin{align*}
\overline{\mathcal{VE}}^{x}(X;t)&=
\frac{1}{\psi_{3}(t;\beta)}\bigg[\Big\{2\log(\psi_{3}(t;\beta))+(6+2\beta)e^{-\beta}\Big\}\Big\{\beta^5-(t-2\beta)^5e^{-(t-3\beta)}\Big\}\\
&+\Big\{\Big(\log\big(\psi_{3}(t;\beta)\big)\Big)^2
+2(5+\beta)\log(\psi_{3}(t;\beta))+5(6+2\beta)e^{-\beta}\Big\}\Big\{\beta^4+4(\beta^2-4\beta\\
&+6)e^{-\beta}-(t-2\beta)^4e^{-(t-3\beta)}
-(t-2\beta)^4e^{-(t-3\beta)}-4(3-\beta)e^{-(t-4\beta)}\Big((t-2\beta+1)^2\\
&-(t-2\beta)^3\Big)\Big\}\bigg]-\big(\psi_{4}(t;\beta)\big)^2,
\end{align*}
where $\psi_{3}(t;\beta)=e^{-(t-\beta)}-1$, $\psi_{4}(t;\beta)=\frac{1}{\psi_{3}(t;\beta)}\Big[(\beta^2-4\beta+6)-e^{-(t-3\beta)}\Big\{(3-\beta)(t-2\beta+1)^2+(3-\beta)-(t-2\beta)^3\Big\}+\log(\psi_{3}(t;\beta))\Big\{(\beta^2-2\beta+2)+\big((t-2\beta+1)^2+1\big)e^{-(t-3\beta)}\Big\}\Big]$ and $\psi_{5}(t;\beta)=1-\beta+(2\beta-t-1)e^{-(t-\beta)}$. We have plotted the graphs of the WPVE of $X$ in Figure \ref{fig4}. Clearly, the WPVE is non-monotone with respect to $t$ and $\beta.$
\end{example}

\begin{figure}[h!]
		\centering
	\subfigure[]{\label{c1}\includegraphics[height=2.0in]{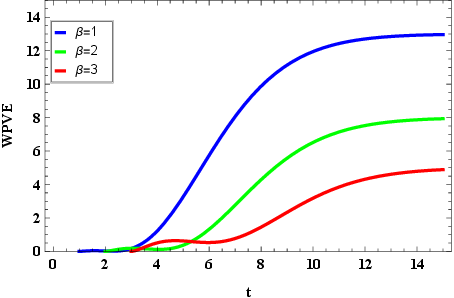}}
	 \subfigure[]{\label{c1}\includegraphics[height=2.0in]{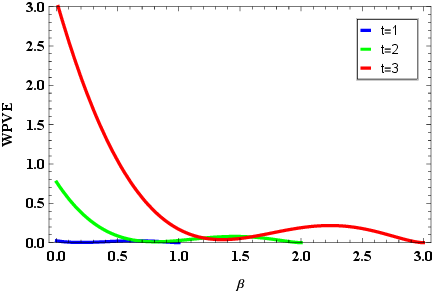}}
		\caption{Graphs of the WPVE in Example \ref{ex2.4} $(a)$ with respect to $t$ (for fixed $b$)  and $(b)$ with respect to $\beta$ (for fixed $t$).}
		\label{fig4}
	\end{figure}

\section{WPVE for PRHR model}\label{sec3}
The PRHR model is a crucial idea to use in reliability engineering, survival analysis, industries, and  various other fields. \cite{gupta1998modeling} introduced PRHR model and discussed its properties. It offers flexibility in modelling the reversed hazard rate of events that exhibit decreasing reversed failure rates over time. This is particularly useful in situations where the traditional models (like the Weibull distribution) which assumes increasing or constant hazard rates, may not adequately capture the behaviour of the data.
 Let $Y$ and $X$ be two RVs with CDFs $G_1(\cdot)$ and $G_2(\cdot)$, respectively. The PRHR model of RVs $X$ and $Y$ is defined by
\begin{align}\label{eq3.1}
G_2(t)=P[Y^{(a)}\le t]=[G_1(t)]^a,~~\text{for all $t>0$ and $a>0$.}
\end{align} 
Its PDF is
\begin{align}
g_2(t)=a[G_1(t)]^{a-1}g_1(t),
\end{align} 
where $g_1(\cdot)$ and $g_2(\cdot)$ are the PDFs of $Y$ and $X$, respectively.  It is known as the Lehmans's alternatives for $a>0$. Various researchers studied PRHR model and introduced its several properties. \cite{gupta2007proportional} discussed Fisher information in PRHR model. \cite{li2008mixture} proposed a mixture model of PRHR model and discussed some properties. 
 For some properties of the PRHR model, we refer to \cite{finkelstein2002reversed}, \cite{nanda2011dynamic}, \cite{balakrishnan2018necessary}, and \cite{popovic2021generalized}. Suppose $\tilde\Lambda(x)$ denotes the CRHR function  of $X$, is defined by
\begin{eqnarray}\label{eq3.2}
\tilde\Lambda(y)=-\log \big({G_2}(y)\big)=-\log \big([{G_1}(y)]^a\big)=a\Lambda^{*}(y),~y>0.
\end{eqnarray}
The weighted past SE of $X$ is obtained as
\begin{eqnarray}\label{eq3.4}
\overline{\mathcal{H}}^y(X;t)&=& -\int_{0}^{t}y\frac{g_2(y)}{{G_2}(t)}\log \big(g_2(y)\big)dy-\int_{0}^{t}y\frac{g_2(y)}{{G_2}(t)}\tilde\Lambda(t)dy\nonumber\\
&=&- \frac{1}{[ {G_1}(t)]^a}\bigg\{\int_{0}^{[{G_1}(t)]^{a}}\mathcal{L}(x:a)dx+a\Lambda^*(t)\int_{0}^{[{G_1}(t)]^{a}} {G_1}^{-1}(x^{1/a})dx\bigg\}\nonumber\\
&=&\frac{1}{[ {G_1}(t)]^a}\int_{0}^{[{G_1}(t)]^{a}}\mathcal{J}(x:a,t)dx,
\end{eqnarray}
where $x=[{G_1}(y)]^a$, $\mathcal{L}(y:a)= {G_1}^{-1}(x^{1/a})\log \big(ax^{1-1/a}g[ {G_1}^{-1}(x^{1/a})]\big),$ and 
\begin{eqnarray}\label{eq3.5}
\mathcal{J}(x:a,t)= -{G_1}^{-1}(x^{1/a})\log \Big(a\frac{y^{1-1/a}}{[{G_1}(t)]^a}g_1[ {G_1}^{-1}(x^{1/a})]\Big),
\end{eqnarray}
where $ {G_1}^{-1}(x^{1/a})=\sup\{y:{G_1}(y)\le x^{1/a}\}$ is called the quantile function of ${G_1(\cdot)}$. Next, we obtain WPVE for the PRHR model in (\ref{eq3.1}).

\begin{theorem}\label{th3.1}
	Suppose  $X$ is an RV having CDF $G_2(\cdot)$ in (\ref{eq3.1}) . Then, for $a>0$ and $t>0$, the WPVE  of $X$  is
	\begin{eqnarray}\label{eq3.6}
	\overline{\mathcal{VE}}^y(X;t)=\frac{1}{[{G_1}(t)]^a}\int_{0}^{[{G_1}(t)]^{a}}[\mathcal{J}(x:a,t)]^2dx-\frac{1}{[{G_1}(t)]^{2a}}\bigg\{\int_{0}^{[{G_1}(t)]^{a}}\mathcal{J}(x:a,t)dx\bigg\}^2,
	\end{eqnarray}
	where $\mathcal{J}(x:a,t)$ is given in (\ref{eq3.5}).
\end{theorem}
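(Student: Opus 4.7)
The plan is to compute the two pieces of $\overline{\mathcal{VE}}^y(X;t)$ appearing in the defining identity
\[
\overline{\mathcal{VE}}^y(X;t) \;=\; \int_{0}^{t}\frac{g_2(y)}{G_2(t)}\Big(y\log\tfrac{g_2(y)}{G_2(t)}\Big)^{2}\!dy \;-\; \big[\overline{\mathcal{H}}^{y}(X;t)\big]^{2},
\]
specialised to the PRHR model, and to reduce both to integrals against the same change of variable so that the second term in (\ref{eq3.6}) emerges cleanly. The second summand $[\overline{\mathcal{H}}^{y}(X;t)]^{2}$ has already been written in the form $\frac{1}{[G_1(t)]^{a}}\int_{0}^{[G_1(t)]^{a}}\mathcal{J}(x:a,t)\,dx$ in (\ref{eq3.4})--(\ref{eq3.5}); squaring that expression directly produces the second term on the right of (\ref{eq3.6}), so the real work is the first term.

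For the first term I would substitute $g_2(y)=a[G_1(y)]^{a-1}g_1(y)$ and $G_2(t)=[G_1(t)]^{a}$ and then perform the change of variable $x=[G_1(y)]^{a}$. Since $dx = a[G_1(y)]^{a-1}g_1(y)\,dy = g_2(y)\,dy$, and $y=G_1^{-1}(x^{1/a})$, the factor $\frac{g_2(y)}{G_2(t)}\,dy$ becomes $\frac{dx}{[G_1(t)]^{a}}$, while the limits transform $y\in(0,t)$ into $x\in(0,[G_1(t)]^{a})$. The integrand, using $[G_1(y)]^{a-1}=x^{1-1/a}$, becomes
\[
\Big(G_1^{-1}(x^{1/a})\,\log\!\Big(\tfrac{a\,x^{1-1/a} g_1[G_1^{-1}(x^{1/a})]}{[G_1(t)]^{a}}\Big)\Big)^{2} \;=\; [\mathcal{J}(x:a,t)]^{2},
\]
matching the definition in (\ref{eq3.5}) up to the square removing the overall sign. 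This gives exactly the first term on the right of (\ref{eq3.6}).

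Combining these two computations yields the claimed formula. The main obstacle, such as it is, is purely bookkeeping: keeping the exponents $a-1$ and $1-1/a$ aligned after the substitution, and verifying that the minus sign inside $\mathcal{J}$ is immaterial once squared but crucial for matching (\ref{eq3.4}) in the mean term. No additional analytic ingredient is needed beyond the definition (\ref{eq2.1}) of the WPVE, the PRHR representation (\ref{eq3.1}), and the already-derived expression (\ref{eq3.4}) for $\overline{\mathcal{H}}^{y}(X;t)$.
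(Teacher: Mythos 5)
Your proposal is correct and follows essentially the same route as the paper: both apply the substitution $x=[G_1(y)]^a$ (so that $g_2(y)\,dy=dx$) to the integral in the definition (\ref{eq2.1}) and reuse the representation (\ref{eq3.4}) of $\overline{\mathcal{H}}^{y}(X;t)$ for the squared mean term. The only difference is cosmetic: the paper first expands the squared logarithm into three terms involving $\mathcal{L}(x:a)$ and $\Lambda^{*}(t)$ before recombining them into $[\mathcal{J}(x:a,t)]^2$, whereas you substitute into the squared integrand directly, which is slightly cleaner.
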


\begin{proof}
	 Using (\ref{eq3.1}) in (\ref{eq2.1}), we have 
	\begin{eqnarray}\label{eq3.7}
\overline{\mathcal{VE}}^y(X;t)=\int_{0}^{t}\frac{g_2(y)}{{G_2}(t)}\bigg(y\log \Big(\frac{g_2(y)}{ {G_2}(t)}\Big) \bigg)^2dy-[\overline{\mathcal{H}}^y(X;t)]^2.
	\end{eqnarray}
	Now, using $x=[{G_1}(y)]^a$
	\begin{eqnarray}\label{eq3.8}
	\int_{0}^{t}\frac{g_2(y)}{{G_2}(t)}\bigg(y\log \Big(\frac{g_2(y)}{ {G_2}(t)}\Big) \bigg)^2dy&=&\frac{1}{[{G_1}(t)]^a}\bigg\{\int_{0}^{[{G_1}(t)]^{a}}[\mathcal{L}(x:a)]^2dx\nonumber\\
	&~&+2\Lambda^{*(a)}(t)\int_{0}^{[{G_1}(t)]^{a}}{G_1}^{-1}(x^{1/a})\mathcal{L}(x:a)dx\nonumber\\
	&~& +(\Lambda^{*(a)}(t))^2\int_{0}^{[{G_1}(t)]^{a}}[{G_1}^{-1}(x^{1/a})]^2dx\bigg\}\nonumber\\
	&=& \frac{1}{[{G_1}(t)]^a}\int_{0}^{[{G_1}(t)]^{a}}[\mathcal{J}(x:a,t)]^2dx.
	\end{eqnarray}
	Using (\ref{eq3.4}) and (\ref{eq3.8}) in (\ref{eq3.7}), the result readily follows. 
\end{proof}


Next, using Theorem \ref{th3.1}, we obtain the WPVE for a PRHR model.
\begin{example}\label{ex3.1}
Consider the power distribution with CDF $G_1(y)=(\frac{y}{\beta})^\alpha,~y>0,~\alpha>0$ and $\beta>0$. The WPVE for PRHR model with baseline distribution as power distribution is 
\begin{align}\label{eq3.9}
\overline{\mathcal{VE}}^{y}(X;t)&=\frac{a\alpha t^2}{\beta^2(2+a\alpha)}\bigg[\Big\{\log\bigg(\Big(\frac{a\alpha\beta^{a\alpha-1}}{t^{a\alpha}}\Big)^\beta\bigg)\Big\}^2+2\beta(a\alpha-1)\Big(\log(t/\beta)-\frac{1}{2+a\alpha}\Big)\nonumber\\
~&\times \log\bigg(\Big(\frac{a\alpha\beta^{a\alpha-1}}{t^{a\alpha}}\Big)^\beta\bigg)+\frac{2\beta^2(a\alpha-1)^2}{(2+a\alpha)}\Big\{(1+a\alpha)\log(t/\beta)-\frac{1}{2+a\alpha}\Big\} \bigg]\nonumber\\
~&-\Big(\frac{\beta}{t}\Big)^{2a\alpha}\Big(\frac{t^{1+a\alpha}}{\beta^{a\alpha(1+a\alpha)}}\Big)^2\bigg[a\alpha\Big\{\log(a\alpha/\beta)-a\alpha\log(t/\beta)\Big\}+\frac{a\alpha-1}{1+a\alpha}\bigg]^2.
\end{align}
We have plotted the WPVE in (\ref{eq3.9}) in Figure \ref{fig5} with respect to $t$ and $a$. 

\end{example}

\begin{figure}[h!]
		\centering
	\subfigure[]{\label{c1}\includegraphics[height=2in]{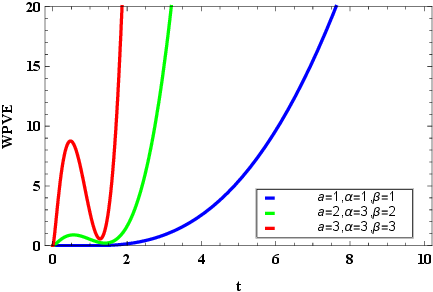}}
	\subfigure[]{\label{c1}\includegraphics[height=2in]{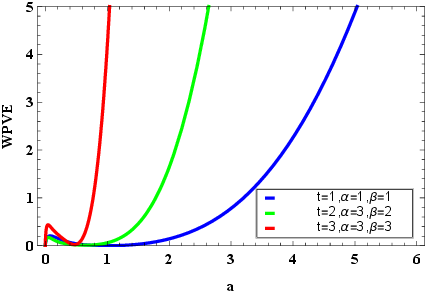}}
		\caption{ Plots for the WPVE  $(a)$ with respect to $t$ (for fixed $a,\alpha,\beta$) and $(b)$ with respect to $a$ (for fixed $t,\alpha,\beta$) in Example \ref{ex3.1}.}
	\label{fig5}	
	\end{figure}

\section{Weighted paired dynamic varentropy}\label{sec4}
Let $Y$ be a discrete RV with mass function $p_{i}$, $i=1,\ldots,n$. Then, the paired entropy is (see \cite{burbea1982convexity})
 \begin{align}
 \mathcal{PH}(Y)=-\sum_{i=1}^{n}\Big[(1-p_i)\log (1-p_i)+p_i\log (p_i)\Big].
  \end{align}
Motivated by the paired entropy, \cite{klein2016cumulative} introduced cumulative paired entropy for a continuous RV, which is given by 
 \begin{align}\label{eq4.2}
	\mathcal{CP}(Y)=-\int_{0}^{\infty}[G(y)\log(G(y))+\bar{G}(y)\log(\bar{G}(y))]dy.
\end{align}

Note that $\mathcal{CP}$ in (\ref{eq4.2}) is a combination of the cumulative entropy (see \cite{di2009cumulative}) and cumulative residual entropy (see \cite{rao2004cumulative}). Further, \cite{klein2016cumulative} studied its properties. In particular, they discussed how cumulative paired entropy used directly or implicitly working in five scientific disciplines: Fuzzy set theory, generalised maximum entropy principle, theory of dispersion of ordered categorical variables, uncertainty theory and reliability theory   with an entropy based on distribution functions or survival functions. Motivated by the concept of the paired entropy and cumulative paired entropy, here, we introduce a new information measure combining the concept of past entropy and residual entropy. Suppose $Y$ is an RV with CDF $G(\cdot)$. Then,  the WPDE for $t>0$ is defined as 
\begin{align}
\mathcal{PH}^\omega(Y;t)&=-\left[\int_{0}^{t}\omega(y)\frac{g(y)}{G(t)}\log \left(\frac{g(y)}{G(t)}\right)dy+\int_{t}^{\infty}\omega(y)\frac{g(y)}{\bar G(t)}\log \left(\frac{g(y)}{\bar G(t)}\right)dy\right]\nonumber\\
&=\overline{\mathcal{H}}^\omega(Y;t)+\mathcal{H}^\omega(Y;t).
\end{align}	

Consider an affine transformation  $X=\alpha Y+\beta$, where $\alpha>0$, $\beta\ge0$. Then, for $\omega(y)=y$, the WPDE is obtained as
\begin{equation}\label{eq4.4*}
\mathcal{PH}^y(X;t)=\mathcal{PH}^{\omega_2}\Big(Y;\frac{t-\beta}{\alpha}\Big)+\log(\alpha)\Big\{E\Big[\alpha Y+\beta\Big|Y\le\frac{t-\beta}{\alpha}\Big]+E\Big[\alpha Y+\beta\Big|Y\ge\frac{t-\beta}{\alpha}\Big]\Big\},
\end{equation} 
where $\omega_2(y)=\alpha y+\beta.$ From (\ref{eq4.4*}), we observe that like weighted dynamic (residual and past) entropies, the WPDE is also shift-dependent. Next, the closed form expression of the WPDE is obtained. 

\begin{example}~~\label{ex4.1}
	\begin{itemize}
		\item[$(i)$] For the uniform RV $Y$ with CDF $G(y)=\frac{y}{\beta}, ~y\in[0,\beta]$ and $\beta>0$, the WPDE is 
		\begin{align}
			\mathcal{PH}^y(Y;t)&=\frac{t}{2}\log (t)+\frac{\beta+t}{2}\big(\log(\beta-t)\big),~t>0.
		\end{align}
		\item[$(ii)$] Assume that $Y$ follows exponential distribution with mean $1/\lambda$. For $t>0,$
		\begin{align}
			\mathcal{PH}^y(Y;t)&=\frac{1}{\lambda(e^{-\lambda t}-1)}\Big[\Big\{1-e^{-\lambda t}(1+\lambda t)\Big\}\Big\{\log \Big(\frac{\lambda}{1-e^{-\lambda t}}\Big)-2\Big\}+\lambda^2t^2e^{-\lambda (t)}\Big]\nonumber\\
			&+\frac{1}{\lambda}\Big\{(\lambda t+1)\log(\lambda)-\lambda t-2\Big\}.
		\end{align}
	To see the behaviour of the WPDE for uniform and exponential distributions, we have plotted their WPDEs in Figure 	\ref{fig6}. The graphs show that the WPDE is non-monotone for these distributions.	
	\end{itemize}
	\end{example}

\begin{figure}[h!]
	\centering
	\subfigure[]{\label{c1}\includegraphics[height=2in]{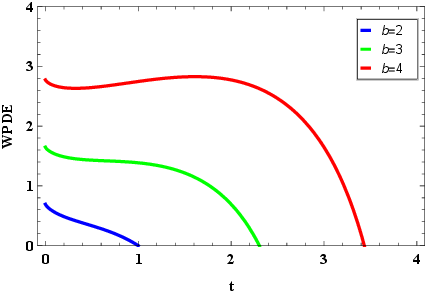}}
	\subfigure[]{\label{c1}\includegraphics[height=2in]{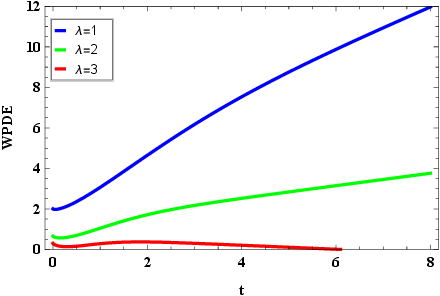}}
	\caption{ Plots of the WPDEs  for $(a)$  the uniform distribution (for fixed $b$) and $(b)$ the exponential distribution (for fixed $\lambda$) in Example \ref{ex4.1}$(i)$ and Example \ref{ex4.1}$(ii)$, respectively.}
	\label{fig6}	
\end{figure}

Inspired by the notions of the paired, cumulative paired, and weighted paired dynamic entropies, here we propose the concept of the WPDVE.
\begin{definition}\label{def4.1}
	Suppose $Y$ is an RV with CDF $G(\cdot)$,  PDF $g(\cdot),$ and survival function $\bar G(\cdot)$. The  WPDVE of $Y$ is defined as
	\begin{align}\label{eq4.3}
		\mathcal{PVE}^\omega(Y;t)
		=\overline{\mathcal{VE}}^\omega(Y;t)+\mathcal{VE}^\omega(Y;t),
	\end{align}
	where $\overline{\mathcal{VE}}^\omega(Y;t)$ and $\mathcal{VE}^\omega(Y;t)$ are respectively known as the WPVE (see (\ref{eq2.1})) and WRVE (see \cite{saha2024weighted}). 
\end{definition}
Note that 
\begin{eqnarray}
	\overline{\mathcal{VE}}^\omega(Y;t)+\mathcal{VE}^\omega(Y;t)&=&Var\Big(-\omega(Y)\log (g^*_t(Y))\Big)+Var\Big(-\omega(Y)\log (g_t(Y))\Big)\nonumber\\&=&Var \Big(-\omega(Y)\log (g^*_t(Y)\times g_t(Y))\Big)\nonumber\\
	&=&Var\Big(IC_J^\omega(Y)\Big),
\end{eqnarray}
where $IC_J^\omega(y)=-\omega(y)\log (g^*_t(y)\times g_t(y))$ is the combined IC of past and residual random lifetimes. 
%
%
In the following, we express WPDVE in terms of the conditional expectations, WPDE, and weighted dynamic (residual and past) entropies when $\omega(y)=y$:
\begin{align}
\mathcal{PVE}^y(Y;t)=&E[(Y\log(g(Y))^2)|Y<t]+E[(Y\log(g(Y))^2)|Y>t]-[{\mathcal{PH}}^y(Y;t)]^2\nonumber\\
&-2\Lambda(t){\mathcal{H}}^{y^2}(Y;t)-2\Lambda^*(t)\overline{\mathcal{H}}^{y^2}(Y;t)-(\Lambda^*(t))^2E[Y^2|Y\le t]\nonumber\\
&-(\Lambda(t))^2E[Y^2|Y> t]+2{\mathcal{H}}^y(Y;t)\overline{{\mathcal{H}}}^y(Y;t).
\end{align}
When $t\rightarrow 0$ or $t\rightarrow\infty,$ the WPDVE in (\ref{eq4.3}) reduces to the weighted varentropy, which has been studied by \cite{saha2024weighted}. Further, considering $\omega(y)=1$, the WPDVE becomes usual varentropy (see \cite{fradelizi2016optimal}) when $t\rightarrow 0$ or $t\rightarrow\infty.$ Due to these reasons, the newly proposed information measure in Definition \ref{def4.1} can be treated as a generalised information measure. 
Next, we establish a lower bound of the WPDVE via the WPVE and WRVE.

\begin{theorem}
Suppose $Y$ is an RV. Then, for a general weight function $\omega(\cdot)$, we have
\begin{align}\label{eq4.5}
\mathcal{PVE}^\omega(Y;t)\ge \max\{\overline{\mathcal{VE}}^\omega(Y;t), \mathcal{VE}^\omega(Y;t)\},~t>0.
\end{align}
\end{theorem}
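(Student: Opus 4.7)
The proof is essentially immediate from the definition of $\mathcal{PVE}^\omega(Y;t)$ together with the non-negativity of variance. The plan is to unpack Definition~\ref{def4.1}, observe that both summands are variances and hence non-negative, and then conclude that the sum dominates each individual summand, which gives the claimed lower bound by the maximum.

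More concretely, I would first recall from Definition~\ref{def4.1} that
\begin{equation*}
\mathcal{PVE}^\omega(Y;t)=\overline{\mathcal{VE}}^\omega(Y;t)+\mathcal{VE}^\omega(Y;t).
\end{equation*}
Next, since $\overline{\mathcal{VE}}^\omega(Y;t)=\mathrm{Var}[I^\omega(Y^*_t)]$ and $\mathcal{VE}^\omega(Y;t)=\mathrm{Var}[I^\omega(Y_t)]$ are variances of real-valued random variables, both quantities are non-negative for every $t>0$. Dropping one of the non-negative summands on the right-hand side yields
\begin{equation*}
\mathcal{PVE}^\omega(Y;t)\ge \overline{\mathcal{VE}}^\omega(Y;t)\quad\text{and}\quad \mathcal{PVE}^\omega(Y;t)\ge \mathcal{VE}^\omega(Y;t),
\end{equation*}
from which the bound \eqref{eq4.5} follows by taking the maximum of the two right-hand sides.

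There is no real obstacle here; the statement is a direct consequence of additivity of the definition and non-negativity of variance. The only thing worth checking carefully is that the representations of $\overline{\mathcal{VE}}^\omega(Y;t)$ and $\mathcal{VE}^\omega(Y;t)$ really are variances (and not merely quantities named ``varentropy''), which is evident from \eqref{eq2.1} and \eqref{eq1.7} in Section~\ref{sec2}, where they are written as $\mathrm{Var}[I^\omega(Y^*_t)]$ and $\mathrm{Var}[IC^\omega(Y_t)]$, respectively. Thus the argument is a one-line consequence of the preceding definitions and requires no further analytic machinery.
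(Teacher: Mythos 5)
Your proposal is correct and follows essentially the same route as the paper: both arguments unpack $\mathcal{PVE}^\omega(Y;t)$ as the sum $\overline{\mathcal{VE}}^\omega(Y;t)+\mathcal{VE}^\omega(Y;t)$, invoke the non-negativity of each summand (as a variance), and conclude by taking the maximum. Your explicit remark that the summands are genuinely variances of $I^\omega(Y^*_t)$ and $I^\omega(Y_t)$ is a small but welcome clarification of the paper's one-line justification.
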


\begin{proof}
From (\ref{eq4.3}), it is clear that
\begin{align}\label{eq4.6}
\mathcal{PVE}^\omega(Y;t)\ge\overline{\mathcal{VE}}^\omega(Y;t)>0
\end{align}
and
\begin{align}\label{eq4.7}
\mathcal{PVE}^\omega(Y;t)\ge  \mathcal{VE}^\omega(Y;t)>0.
\end{align}
Now, combining (\ref{eq4.6}) and (\ref{eq4.7}), the result follows.
\end{proof}

The following theorem provides an upper bound of the WPDVE.

\begin{theorem}
For the  RV $Y$
 \begin{align*}
 \mathcal{PVE}^y(Y;t)\le& E[(\psi_{1}(Y))^2|Y\le t]+ E[(\psi_{1}(Y))^2|Y\ge t]-2\Lambda^*(t)\overline{\mathcal{H}}^{y^2}(Y;t)-2\Lambda(t){\mathcal{H}}^{y^2}(Y;t),
 \end{align*}
 where $t>0$ and $\psi_{1}(y)=y\log(g(y))$.
\end{theorem}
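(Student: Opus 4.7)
The plan is to reduce the inequality to Definition~\ref{def4.1}, which gives the decomposition $\mathcal{PVE}^y(Y;t)=\overline{\mathcal{VE}}^y(Y;t)+\mathcal{VE}^y(Y;t)$, and then to drop a handful of manifestly non-negative terms from an \emph{exact} expansion of each summand.

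First, I would write down the closed-form expansion of $\overline{\mathcal{VE}}^y(Y;t)$ supplied in (\ref{eq2.2}), namely
\begin{equation*}
\overline{\mathcal{VE}}^y(Y;t)=E[(\psi_1(Y))^2\mid Y\le t]-2\Lambda^*(t)\overline{\mathcal{H}}^{y^2}(Y;t)-(\Lambda^*(t))^2E[Y^2\mid Y\le t]-[\overline{\mathcal{H}}^y(Y;t)]^2,
\end{equation*}
and then derive the residual analogue by the same algebraic manipulation: expand $(\log g(y)+\Lambda(t))^2$ inside the defining integral of $\mathcal{VE}^y(Y;t)$, where $\Lambda(t)=-\log\bar G(t)$ is the cumulative hazard rate, and use the identity $\int_t^\infty y^2\frac{g(y)}{\bar G(t)}\log g(y)\,dy=-\mathcal{H}^{y^2}(Y;t)-\Lambda(t)E[Y^2\mid Y>t]$ to rewrite the cross term. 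This produces
\begin{equation*}
\mathcal{VE}^y(Y;t)=E[(\psi_1(Y))^2\mid Y>t]-2\Lambda(t)\mathcal{H}^{y^2}(Y;t)-(\Lambda(t))^2E[Y^2\mid Y>t]-[\mathcal{H}^y(Y;t)]^2.
\end{equation*}

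Next, I would add the two displays to get an exact identity for $\mathcal{PVE}^y(Y;t)$, and then obtain the upper bound by discarding the four terms
\begin{equation*}
(\Lambda^*(t))^2E[Y^2\mid Y\le t],\quad (\Lambda(t))^2E[Y^2\mid Y>t],\quad [\overline{\mathcal{H}}^y(Y;t)]^2,\quad [\mathcal{H}^y(Y;t)]^2.
\end{equation*}
Each is non-negative: $\Lambda^*(t)$ and $\Lambda(t)$ are non-negative as negative logarithms of numbers in $[0,1]$, $E[Y^2\mid\cdot]\ge 0$ since $Y$ is non-negative, and the two entropy squares are trivially non-negative even when the weighted past/residual entropies themselves are signed. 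Because all four quantities enter the identity with a negative coefficient, dropping them preserves the inequality and yields exactly the stated bound with the remaining terms $E[\psi_1^2\mid Y\le t]$, $E[\psi_1^2\mid Y>t]$, $-2\Lambda^*(t)\overline{\mathcal{H}}^{y^2}(Y;t)$, and $-2\Lambda(t)\mathcal{H}^{y^2}(Y;t)$.

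There is no real obstacle here beyond bookkeeping. The one point worth care is the residual analogue of (\ref{eq2.2}): its derivation mirrors the past-lifetime case verbatim, but one must keep the cumulative hazard $\Lambda(t)$ and weighted residual entropy $\mathcal{H}^{y^2}(Y;t)$ straight (as opposed to their reversed counterparts) so that the signs of the retained and discarded terms align properly with those in the statement.
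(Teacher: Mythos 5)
Your proposal is correct and follows essentially the same route as the paper: expand $\overline{\mathcal{VE}}^y(Y;t)$ via (\ref{eq2.2}) and its residual analogue, then drop the non-negative terms $(\Lambda^*(t))^2E[Y^2\mid Y\le t]$, $(\Lambda(t))^2E[Y^2\mid Y>t]$, $[\overline{\mathcal{H}}^y(Y;t)]^2$, $[\mathcal{H}^y(Y;t)]^2$, and sum. The only cosmetic difference is that you re-derive the residual expansion directly, whereas the paper imports it as inequality $(3.2)$ of \cite{saha2024weighted}.
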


\begin{proof}
 From (\ref{eq2.2}), we have
\begin{align}\label{eq4.13*}
\mathcal{\overline{VE}}^y(Y;t)=E[(\psi_{1}(Y))^2|Y\le t]-2\Lambda^*(t)\overline{\mathcal{H}}^{y^2}(Y;t)-(\Lambda^*(t))^2E[Y^2|Y\le t]-[\overline{\mathcal{H}}^{y}(Y;t)]^2.
\end{align}
It is clear that $(\Lambda^*(t))^2E[Y^2|Y\le t]$ and $[\overline{\mathcal{H}}^{y}(Y;t)]^2$ are always non-negative. Using this observation, from (\ref{eq4.13*}) we obtain
\begin{align}\label{eq4.8}
 \mathcal{\overline{VE}}^y(Y;t)\le E[(\psi_{1}(Y))^2|Y\le t]-2\Lambda^*(t)\overline{\mathcal{H}}^{y^2}(Y;t).
 \end{align}
 Further, from $(3.2)$ of \cite{saha2024weighted}, we get an upper bound of the WRVE likewise in (\ref{eq4.8}) as
 \begin{align}\label{eq4.9}
  \mathcal{{VE}}^y(Y;t)\le E[(\psi_{1}(Y))^2|Y\ge t]-2\Lambda(t){\mathcal{H}}^{y^2}(Y;t).
  \end{align} 
Thus, the required bound follows after summing (\ref{eq4.8}) and (\ref{eq4.9}), completing the proof of the theorem.
\end{proof}

\begin{theorem}
Suppose $Y^*_t$ and $Y_t$ respectively denote the past and residual lifetimes with finite MRL $\mu(t)$, finite MPL $\mathcal{M}(t)$, finite VRL $\sigma^2_1(t)$, and VPL $\sigma^2(t)$. Then,
\begin{align}
\mathcal{PVE}^y(Y;t)\ge \max\{\pi(y,t),\theta(y,t)\},
\end{align}
where $\pi(y,t)=\sigma^2(t)\{1+E[-\zeta_t(Y_t)\log \big(g_t(Y_t\big)]+E[Y_t\zeta^{'}_t(Y_t)]\}^2$ and $\theta(y,t)=\sigma^2_1(t)\{1+E[-\eta_t(Y_t)\log \big(g_t(Y_t\big)+E[Y_t\eta^{'}_t(Y_t)]\}^2$. Here, $\eta_t(y)$ and $\zeta_t(y)$ are real-valued functions, respectively obtained from
	$$\sigma^2_1(t)\eta_t(y)g_t(y)=\int_{0}^{y}(\mu(t)-u)g_t(u)du, ~y>0$$
and  
	\begin{align*}
		\sigma^2(t)\zeta_t(y)g^*_t(y)=\int_{0}^{y}(\mathcal{M}(t)-u)g^*_t(u)du, ~y>0.
	\end{align*}	
\end{theorem}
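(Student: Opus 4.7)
The plan is to exploit the additive decomposition $\mathcal{PVE}^\omega(Y;t)=\overline{\mathcal{VE}}^\omega(Y;t)+\mathcal{VE}^\omega(Y;t)$ from Definition \ref{def4.1}, together with the Cacoullos-type variance inequality (\ref{eq2.9}) that already drove the proof of Theorem \ref{th2.2}. First I would invoke the preceding lower bound $\mathcal{PVE}^y(Y;t)\ge \max\{\overline{\mathcal{VE}}^y(Y;t),\mathcal{VE}^y(Y;t)\}$, which reduces the task to producing two separate lower bounds: $\overline{\mathcal{VE}}^y(Y;t)\ge \pi(y,t)$ and $\mathcal{VE}^y(Y;t)\ge \theta(y,t)$. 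Once these are in hand, taking the maximum over the two inequalities immediately yields the stated conclusion.

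The first bound is essentially already proved. Specializing Theorem \ref{th2.2} to the weight $\omega(y)=y$ delivers $\overline{\mathcal{VE}}^y(Y;t)\ge \pi(y,t)$, because the auxiliary function $\zeta_t$ appearing there is defined through the past PDF $g^*_t$, the MPL $\mathcal{M}(t)$ and the VPL $\sigma^2(t)$, which are exactly the ingredients of $\pi(y,t)$. The only real content lies in establishing the residual-lifetime analog. For that, I would apply the Cacoullos inequality (\ref{eq2.9}) once more, this time taking the residual lifetime $Y_t$ (with density $g_t$, mean $\mu(t)$ and variance $\sigma^2_1(t)$) as the reference RV, and with $\mathcal{I}(y)=-y\log g(y)$. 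The role of $\zeta_t$ is then played by $\eta_t$, defined through $\sigma^2_1(t)\eta_t(y)g_t(y)=\int_0^y(\mu(t)-u)g_t(u)\,du$. Repeating verbatim the three manipulations used in the proof of Theorem \ref{th2.2} — differentiation of $-Y_t\log g_t(Y_t)$, rewriting the expectation $E[\eta_t(Y_t)Y_tg'_t(Y_t)/g_t(Y_t)]$ via the defining equation for $\eta_t$, and simplifying through integration by parts — gives $\mathcal{VE}^y(Y;t)\ge \theta(y,t)$.

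I do not anticipate any serious obstacle: no new analytic tool is required beyond two parallel invocations of (\ref{eq2.9}), and the structural symmetry between the past and residual cases is exactly what makes the two halves go through in the same way. The main bookkeeping care is to keep the past/residual notation cleanly separated, noting that $\zeta_t$ is tied to the past quantities $(g^*_t,\mathcal{M}(t),\sigma^2(t))$ while $\eta_t$ is tied to the residual quantities $(g_t,\mu(t),\sigma^2_1(t))$; beyond this distinction, the proof is essentially a corollary of Theorem \ref{th2.2} together with its residual analog.
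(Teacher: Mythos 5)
Your proposal is correct and follows essentially the same route as the paper: the paper likewise combines the additive decomposition of $\mathcal{PVE}^y(Y;t)$ with the Theorem \ref{th2.2} bound for the past part and the residual-lifetime analogue (which the paper simply cites from Theorem 3.3 of \cite{saha2024weighted} rather than re-deriving via the Cacoullos inequality as you propose, though the derivation is the same). No gap; your re-derivation of the residual half is just an inlined version of the cited result.
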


\begin{proof}
From Theorem \ref{th2.2}, we have
\begin{eqnarray}\label{eq4.11}
	\overline{\mathcal{VE}}^y(Y;t)\geq \sigma^2(t)\{1+E[-\zeta_t(Y_t)\log (g^*_t(Y_t))]+E[Y_t\zeta^{'}_t(Y_t)]\}^2.
	\end{eqnarray}
Further, from Theorem $3.3$ of \cite{saha2024weighted}, we get
\begin{eqnarray}\label{eq4.12}
	\mathcal{VE}^y(Y;t)\geq \sigma^2_1(t)\{1+E[-\eta_t(Y_t)\log (g_t(Y_t))]+E[Y_t\eta^{'}_t(Y_t)]\}^2.
	\end{eqnarray}
	Thus, using (\ref{eq4.11}) and (\ref{eq4.12}), the result readily follows.
\end{proof}

This section ends with a result dealing with the effect of the WPDVE under affine transformations. 

\begin{theorem}
Let $Y$ be an RV. Assume that $X=aY+b$ with $a>0,~b\ge0$. Then, for all real number $t> 0$,
\begin{align*}
\mathcal{PVE}^y(X;t)&=\mathcal{PVE}^{\omega_1}(X;t)-2\log (a)\mathcal{PH}^{\omega_1}(X;t)+(\log (a))^2[\xi(a,b,t)\{1-\xi(a,b,t)\}\\
&+\varphi(a,b,t)\{1-\varphi(a,b,t)\}]
-2\log (a) [\mathcal{H}^{\omega_1}(Y;(t-b)/a)\{1+\varphi(a,b,t)\}\\
&+\overline{\mathcal{H}}^{\omega_1}(Y;(t-b)/a)\{1+\xi(a,b,t)\}],
\end{align*}
where $\mathcal{PVE}^{\omega_1}(X;t)$ and $\mathcal{PH}^{\omega_1}(X;t)$ are the WPDVE and WPDE with weight function $\omega_1\equiv\omega_1(y)=ay+b$ respectively, and $\varphi(a,b,t)=E[aY+b|Y>(t-b)/a]$ and $\xi(a,b,t)=E[aY+b|Y\le(t-b)/a]$.
\end{theorem}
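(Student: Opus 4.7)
My plan is to decompose $\mathcal{PVE}^y(X;t)$ into its past and residual constituents via Definition \ref{def4.1}, namely $\mathcal{PVE}^y(X;t) = \overline{\mathcal{VE}}^y(X;t) + \mathcal{VE}^y(X;t)$, and to treat each summand using a separate affine-transformation identity before recombining. This is natural because the affine map $X=aY+b$ preserves the dichotomy $\{Y\leq (t-b)/a\}$ versus $\{Y>(t-b)/a\}$, so the past and residual pieces can be handled independently and symmetrically.

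For the past component $\overline{\mathcal{VE}}^y(X;t)$, I would directly invoke Corollary \ref{th2.4}, which expresses it through $\overline{\mathcal{VE}}^{\omega_1}(Y;(t-b)/a)$, the weighted past entropy $\overline{\mathcal{H}}^{\omega_1}(Y;(t-b)/a)$, and the conditional variance $\mathrm{Var}[aY+b\mid Y\leq (t-b)/a]$, where $\omega_1(y)=ay+b$. For the residual component $\mathcal{VE}^y(X;t)$, I would mimic the argument in the strictly increasing case of Theorem \ref{th2.3} but integrate over $(t,\infty)$ instead of $(0,t)$, obtaining an analogous identity in terms of $\mathcal{VE}^{\omega_1}(Y;(t-b)/a)$, $\mathcal{H}^{\omega_1}(Y;(t-b)/a)$, and $\mathrm{Var}[aY+b\mid Y>(t-b)/a]$. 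The common mechanism in both pieces is the density scaling $g_X(x)=a^{-1}g_Y((x-b)/a)$, whose logarithm carries a $-\log a$ shift; squaring this shift inside the variance functional produces three distinct term types, namely a $(\log a)^2$ second-moment contribution, a cross term at scale $-2\log a$ times a conditional weighted entropy, and the original variance term in the $\omega_1$-weighted variable $Y$.

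Adding the two identities, I would collect $\overline{\mathcal{VE}}^{\omega_1}(Y;(t-b)/a) + \mathcal{VE}^{\omega_1}(Y;(t-b)/a)$ into $\mathcal{PVE}^{\omega_1}(X;t)$ and $\overline{\mathcal{H}}^{\omega_1}(Y;(t-b)/a) + \mathcal{H}^{\omega_1}(Y;(t-b)/a)$ into $\mathcal{PH}^{\omega_1}(X;t)$, invoking Definition \ref{def4.1} and the defining identity of the WPDE. The leftover pieces split naturally into the cross terms $-2\log a\,\overline{\mathcal{H}}^{\omega_1}(Y;(t-b)/a)(1+\xi(a,b,t))$ and $-2\log a\,\mathcal{H}^{\omega_1}(Y;(t-b)/a)(1+\varphi(a,b,t))$ (the constant $1$ coming from the expansion of the squared weighted entropy of $X$) and the two $(\log a)^2$ conditional second-moment pieces, which, after subtracting off the squared conditional means $\xi^2$ and $\varphi^2$ arising from the same expansion, reorganize into the stated bracket $\xi(1-\xi)+\varphi(1-\varphi)$.

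The main obstacle I anticipate is the careful bookkeeping of the $\log a$ shift inside the squared log-density across both conditional regimes, in particular tracking how the terms $[\overline{\mathcal{H}}^{y}(X;t)]^2$ and $[\mathcal{H}^{y}(X;t)]^2$ produce additive constants (the $1$ factors) and quadratic contributions in $\xi$ and $\varphi$ that must cancel precisely against pieces generated by the $(\log a)^2$ second-moment terms. Once these identifications are verified, the remaining algebra is routine; the strictly decreasing case is not needed here since $a>0$ guarantees that $\psi(y)=ay+b$ is strictly increasing, so only the first branch of Theorem \ref{th2.3} (and its residual analogue) enters the calculation.
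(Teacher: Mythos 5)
Your proposal is correct and follows essentially the same route as the paper: the paper's proof simply decomposes $\mathcal{PVE}^y(X;t)$ into its past and residual parts and cites Corollary \ref{th2.4} together with Corollary 3.1 of \cite{saha2024weighted} (the residual analogue you propose to rederive by adapting the argument of Theorem \ref{th2.3} over $(t,\infty)$), then recombines. Your additional bookkeeping of the $\log a$ shift is just an expanded version of what those two corollaries already encapsulate.
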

\begin{proof}
 From Corollary \ref{th2.4} and Corollary $3.1$ of \cite{saha2024weighted}, the proof follows. 
\end{proof}

\section{Estimation of the WPVE and WPDVE}\label{sec5}

This section presents kernel-based non-parametric estimates of the WPVE and WPDVE. We remark here that the non-parametric estimators are essential because they offer robustness, flexibility, and versatility, making them effective in situations where traditional parametric methods fail due to incorrect assumptions, small sample sizes, or complex data structures. They empower statisticians and data scientists to draw meaningful insights from a wider range of data. We see the performance of the proposed estimates using a Monte-Carlo simulation study. For both WPVE and WPDVE, we have also considered parametric estimation assuming that the data are taken from exponential population. A data set representing average daily wind speeds is considered and analysed for the purpose of estimating WPVE.

\subsection{WPVE }
Here, we consider non-parametric and parametric estimations of the WPVE. First, we discuss about non-parametric estimation.
\subsubsection{Non-parametric estimation}
We introduce a non-parametric estimator based on the kernel estimates of  WPVE in (\ref{eq2.1}). The kernel estimate of the PDF ${g}(\cdot)$ is given by
 \begin{eqnarray}\label{eq5.1}
 \widehat g(y)=\frac{1}{nb_n}\sum_{i=1}^{n}\mathcal{K}\left(\frac{y-Y_i}{b_n}\right), 
 \end{eqnarray} 
where $\mathcal{K}(\cdot)$ is known as kernel, satisfying the following properties.
\begin{itemize}
\item It is non-negative;
\item $\int \mathcal{K}(y)dy=1$;
\item The kernel is symmetric at the origin;
\item It satisfies the Lipschitz condition.
\end{itemize}
  In (\ref{eq5.1}), the sequence of positive real numbers $\{b_n\}$  is known as the bandwidths such that $b_n\rightarrow0$ and $nb_n\rightarrow\infty,$ for $n\rightarrow\infty$. For details about the kernel density estimates, readers can refer  to \cite{rosenblat1956remarks} and \cite{parzen1962estimation}. Using (\ref{eq5.1}), a non-parametric kernel estimate of $\overline{\mathcal{VE}}^y(Y;t)$ is
\begin{eqnarray}\label{eq5.2}
\widehat{ \overline{\mathcal{VE}}^y}(Y;t)=\int_{0}^{t}\widehat{\eta}(y)(y\log  \widehat{\eta}(y))^2dy-\left[\int_{0}^{t}y\widehat{\eta}(y)\log  \widehat{\eta}(y)dy\right]^2,~t>0,
\end{eqnarray}
where $\widehat{\eta}(y)=\frac{\widehat g(y)}{\widehat G(t)}$ and $\widehat G(t)=\int_{0}^{t}\widehat g(y)dy$. Below, we conduct Monte-Carlo simulation to see the performance of the estimate given in (\ref{eq5.2}).

\subsection*{\textbf{Simulation study}}

A Monte-Carlo simulation study has been performed to generate data sets from exponential distribution with mean $1/\lambda$ for different sample sizes. The true parameter value is taken as $\lambda=0.7$. The software ``Mathematica" has been used for the simulation study. For computing the AB and MSE of the kernel-based non-parametric estimate, we use $100$ replications. Here, Gaussian kernel is used  for estimation. It is given by
\begin{align}\label{eq5.3*}
\mathcal{K}(y)=\frac{1}{\sqrt{2\pi}}e^{-\frac{y^2}{2}}.
\end{align}
The AB and MSE of the kernel-based non-parametric estimate $\widehat{ \overline{\mathcal{VE}}^y}(Y;t)$ in (\ref{eq5.2})  have been computed and presented in Table \ref{tb1} for different choices of $t$ and $n$. We have considered $t=0.1,0.2,0.3,0.4,1$ and $n=100,120, 150,200.$ From Table \ref{tb1}, we observe that in general the AB and MSE  decrease as $n$ increases. This confirms the consistency of the proposed estimate $\widehat{ \overline{\mathcal{VE}}^y}(Y;t)$ in (\ref{eq5.2}). 

\begin{table}[ht!]
\caption {The AB, MSE and $\overline{\mathcal{VE}}^y(Y;t)$ for the kernel-based estimate of WPVE in (\ref{eq5.2}).}
	\centering 
	\scalebox{1}{\begin{tabular}{c c c c c c c c } 
			\hline\hline\vspace{.1cm} 
			$t$ &$n$ &~~~~ AB &~~~~MSE &~~~~$\overline{\mathcal{VE}}^y(Y;t)$ \\
			\hline\hline
\multirow{10}{1.9cm}
~ & 100 &~~~~  $0.002899$ &~~~~  $0.000065$&~   \\[1.2ex]
~&  120 & ~~~~ $0.002375$ &~~~~  $0.000008$&~   \\[1.2ex]
0.1& 150 & ~~~~ $0.002066$ & ~~~~ $0.000006$&~~~~ $0.004285$  \\[1.2ex]
~ & 200 & ~~~~ $0.001970$ & ~~~~ $0.000005$&~   \\[1.2ex]

\hline
\multirow{10}{1.9cm}
~ & 100 & ~~~~ $0.004743$ & ~~~~ $0.000026$&~  \\[1.2ex]
~&  120 & ~~~~ $0.004657$ & ~~~~ $0.000025$&~   \\[1.2ex]
0.2& 150 & ~~~~ $0.004287$ &~~~~  $0.000022$&~~~~ $0.007901$  \\[1.2ex]
~ & 200 & ~~~~ $0.004317$ &~~~~  $0.000020$&~   \\[1.2ex]

\hline
\multirow{10}{1.9cm}
~ & 100 & ~~~~ $0.006270$ & ~~~~ $0.000043$&~  \\[1.2ex]
~&  120 & ~~~~ $0.005539$ & ~~~~ $0.000036$&~   \\[1.2ex]
0.3& 150 & ~~~~ $0.005343$ & ~~~~ $0.000032$&~~~~ $0.009078$  \\[1.2ex]
~ & 200 & ~~~~ $0.005303$ & ~~~~ $0.000030$&~   \\[1.2ex]

\hline
\multirow{10}{1.9cm}
~ & 100 & ~~~~ $0.005609$ & ~~~~ $0.000035$&~  \\[1.2ex]
~&  120 &~~~~  $0.005426$ & ~~~~ $0.000034$&~   \\[1.2ex]
0.4& 150 &~~~~  $0.005468$ &~~~~  $0.000033$&~~~~ $0.008114$  \\[1.2ex]
~ & 200 & ~~~~ $0.005173$ &~~~~  $0.000029$&~   \\[1.2ex]

\hline
\multirow{10}{1.9cm}
~ & 100 & ~~~~ $~~0.002632$ &~~~~  $0.000042$&~  \\[1.2ex]
~&  120 & ~~~~ $~~0.001417$ &~~~~  $0.000026$&~   \\[1.2ex]
1.0& 150 & ~~~~ $0.000612$ & ~~~~ $0.000019$&~~~~ $0.011937$  \\[1.2ex]
~ & 200 & ~~~~ $0.000387$ & ~~~~ $0.000016$&~   \\[1.2ex]
			\hline\hline	
			\label{tb1} 		
	\end{tabular}} 
	\end{table}

\subsection*{Real data set}
Here, we consider a real data set representing average daily wind speeds (in meter/second) in November, $2007$ at Elanora Heights, a northeastern suburb of Sydney, Australia. The real data set is presented in Table \ref{tb2} (see \cite{best2010easily}).
\begin{table}[ht!]
\caption {The data set.}
	\centering 
	\scalebox{1}{\begin{tabular}{c c c c c c c c } 
		\toprule
0.5833  ~~0.6667 ~~0.6944 ~~0.7222 ~~0.7500 ~~0.7778 ~~0.8056 ~~0.8056 ~~0.8611\\[.5Ex]
0.8889  ~~0.9167 ~~1.0000 ~~1.0278 ~~1.0278 ~~1.1111 ~~1.1111 ~~1.1111 ~~1.1667\\[.5Ex]
1.1667  ~~1.1944 ~~1.2778 ~~1.2778 ~~1.3056 ~~1.3333 ~~1.3333 ~~1.3611 ~~1.4444\\[.5Ex]
2.1111  ~~2.1389 ~~2.7778 \\
		\bottomrule
	\label{tb2} 			
	\end{tabular}} 
	\end{table}
For checking the best fitted model for the real data set, goodness of fit test has been applied. Here, we have considered four statistical models:  Gumbel-II (GMB-II), Weibull, generalised X-exponential (GXE),  and exponential (EXP) distributions.  We use negative log-likelihood criterion ($-\ln L$), Akaikes-information criterion (AIC), AICc, Bayesian information criterion (BIC) and the $p$-value related to Kolmogorov-Smirnov (K-S) test. From Table $3$, we observe that the GMB-II distribution fits better than other distributions as the values of the test statistics are smaller than that of the other distributions. The Gaussian kernel in (\ref{eq5.3*}) is employed as the kernel function for estimation purpose. The values of AB and MSE of the proposed estimate in (\ref{eq5.2}) have been computed using $500$ bootstrap samples with size $n=30$ and $b_n=0.35$. These are given in Table \ref{tb4} for different choices of $t$.

\begin{table}[ht!]
	\centering
	\caption {{The MLEs, BIC, AICc, AIC, and negative log-likelihood values of the statistical models for the data set presented in Table \ref{tb2}.}}
	\scalebox{1}{\begin{tabular}{cccccccc}
			\toprule
			\textbf{Model}  & \textbf{Shape}  & \textbf{Scale}  & \textbf{-ln L}  & \textbf{AIC}& \textbf{AICc} & \textbf{BIC} &\textbf{p-value} \\
			\midrule
			GMB-II  & $\widehat{\alpha}= 3.3869$ & $\widehat{\lambda}=0.7544$  &  12.5333 &  29.0665 &  29.5109 &  31.8689& 0.92340 \\[1.2ex]
			GXE &  $\widehat{\alpha}=4.1464$ & $\widehat{\lambda}=1.1421$ & 17.3245 & 38.6491&   39.0935 &  41.4515 & 0.67520 \\[1.2ex]
			Weibull  & $\widehat{\alpha}= 2.5393$ & $\widehat{\lambda}=1.3048$  &  18.5659 &  41.1317 &  41.5762&  43.9341 & 0.28650\\[1.2ex]
			EXP &  $\widehat{\lambda}=0.8633$ & ~  & 34.4095 &  70.8191 & 70.9620& 72.2203 & 0.00005\\[1ex]
			\bottomrule
			\label{tb3}
	\end{tabular}}
\end{table}


\begin{table}[ht!]
\caption {The AB, MSE  and  ${\it VE}^y(Y;t)$ for the  data set in Table $2$.}
	\centering 
	\scalebox{1}{\begin{tabular}{c c c c c c c c } 
			\hline\hline\vspace{.1cm} 
			~$t$ &~~~~AB &~~~ MSE & ~~$\mathcal{\overline{VE}}^y(Y;t)$ \\
			\hline\hline
			
		
			~1.0 &~~~0.05122 &~~~0.00453&~~~0.12205 \\[1ex]
			
			~1.1 &~~~0.02278 &~~~0.00135&~~~0.08995  \\[1ex]
			~1.2&~~~0.03233 &~~~0.00176 &~~~0.07566  \\[1ex]
			~1.3 &~~~0.05258 &~~~0.00314 &~~~0.08479    \\[1ex]
			~1.4&~~~0.09765 &~~~0.00959 &~~~0.12025  \\[1ex]	
			~1.5 &~~~0.14900 &~~~0.02237 &~~~0.18279   \\[1ex]
			~1.8 &~~~0.33773 &~~~0.11743 &~~~0.52167   \\[1ex]
			~2.0 &~~~0.45306 &~~~0.21668 &~~~0.85184   \\[1ex]
		
			~2.5&~~~0.59685 &~~~0.37733 &~~~1.98038   \\[1ex]
			~3.0 &~~~0.65917 & ~~~0.55132 &~~~3.10000   \\[1ex]

			\hline\hline
			\label{tb4} 		 		
	\end{tabular}} 
	
\end{table}

\subsubsection{Parametric estimation}
Here, we consider parametric estimation of the WPVE. We assume that the data are taken from an exponential population with parameter $\lambda.$ In this case, the WPVE is obtained as 
\begin{align}\label{eq5.4}
 \mathcal{\overline{VE}}^y(Y;t) = \int_{0}^{t}y^2\frac{\lambda e^{-\lambda y}}{ 1-e^{-\lambda t}}\bigg(\log\Big(\frac{\lambda e^{-\lambda y}}{ 1-e^{-\lambda t}}\Big)\bigg)^2dy-\int_{0}^{t}y\frac{\lambda e^{-\lambda y}}{ 1-e^{-\lambda t}}\log\Big(\frac{\lambda e^{-\lambda y}}{ 1-e^{-\lambda t}}\Big)dy.
 \end{align}
We apply maximum likelihood estimation technique for the purpose of estimation of (\ref{eq5.4}).  Let $\hat \lambda$ be the maximum likelihood estimate (MLE) of the model parameter $\lambda$. Using the invariance property, the MLE of the WPVE is obtained as 
\begin{align}\label{eq5.5}
 \mathcal{\widetilde{\overline{VE}}}^y(Y;t) = \int_{0}^{t}y^2\frac{\hat\lambda e^{-\hat{\lambda} y}}{ 1-e^{-\hat\lambda t}}\bigg(\log\Big(\frac{\hat\lambda e^{-\hat\lambda y}}{ 1-e^{-\hat\lambda t}}\Big)\bigg)^2dy-\int_{0}^{t}y\frac{\hat\lambda e^{-\hat\lambda y}}{ 1-e^{-\hat\lambda t}}\log\Big(\frac{\hat\lambda e^{-\hat\lambda y}}{ 1-e^{-\hat\lambda t}}\Big)dy,~t>0.
 \end{align}
Here, we conduct Monte-Carlo simulation using $R$ software to see the behaviour of the proposed parametric estimate of WPVE. We take $\lambda=0.7$ as its true parameter value. We have considered $t=0.1,0.2,0.3,0.4,1$ and $n=100,120, 150,200.$ Using $100$ replications, the AB and MSE have been computed and presented in Table \ref{tb5}. From Table \ref{tb5}, we observe that  MSEs decrease as $n$ increases.  

From Table \ref{tb1} and Table \ref{tb5}, we observe that the performance of the estimate of the WPVE using parametric approach is superior than the non-parametric approach in terms of the AB and MSE values.

\begin{table}[ht!]
\caption {The AB, MSE, and $\overline{\mathcal{VE}}^y(Y;t)$ for the parametric estimator of WPVE in (\ref{eq5.5}).}
	\centering 
	\scalebox{1}{\begin{tabular}{c c c c c c c c } 
			\hline\hline\vspace{.1cm} 
			$t$ &$n$ & AB & MSE &$\overline{\mathcal{VE}}^y(Y;t)$ \\
			\hline\hline
\multirow{10}{1.9cm}
~ & 100 & $1.977\times 10^{-6}$ & $ 1.653\times 10^{-10}$&~   \\[1.2ex]
~&  120 & $1.961\times 10^{-6}$ & $1.240\times 10^{-10}$&~   \\[1.2ex]
0.1& 150 & $2.387\times 10^{-6}$ & $1.023\times 10^{-10}$&$0.004285$  \\[1.2ex]
~ & 200 & $1.181\times 10^{-6}$ & $6.874\times 10^{-11}$&~   \\[1.2ex]

\hline
\multirow{10}{1.9cm}
~ & 100 & $1.049\times 10^{-5}$ & $4.766\times 10^{-9}$&~  \\[1.2ex]
~&  120 & $1.044\times 10^{-5}$ & $3.578\times 10^{-9}$&~   \\[1.2ex]
0.2& 150 & $1.275\times 10^{-5}$ & $2.953\times 10^{-9}$&$0.007901$  \\[1.2ex]
~ & 200 & $ 6.293\times 10^{-6}$ & $1.987\times 10^{-9}$&~   \\[1.2ex]

\hline
\multirow{10}{1.9cm}
~ & 100 & $2.421\times 10^{-5}$ & $2.647\times 10^{-8}$&~  \\[1.2ex]
~&  120 & $2.424\times 10^{-5}$ & $1.989\times 10^{-8}$&~   \\[1.2ex]
0.3& 150 & $2.977\times 10^{-5}$ & $1.641\times 10^{-8}$&$0.009078$  \\[1.2ex]
~ & 200 & $1.463\times 10^{-5}$ & $1.108\times 10^{-8}$&~   \\[1.2ex]

\hline
\multirow{10}{1.9cm}
~ & 100 & $3.765\times 10^{-5}$ & $6.900\times 10^{-8}$&~  \\[1.2ex]
~&  120 & $3.809\times 10^{-5}$ & $ 5.193\times 10^{-8}$&~   \\[1.2ex]
0.4& 150 & $4.728\times 10^{-5}$ & $4.284\times 10^{-8}$&$0.008114$  \\[1.2ex]
~ & 200 & $2.305\times 10^{-5}$ & $2.908\times 10^{-8}$&~   \\[1.2ex]

\hline
\multirow{10}{1.9cm}
~ & 100 & $4.220\times 10^{-4}$ & $5.061\times 10^{-6}$&~  \\[1.2ex]
~&  120 & $3.981\times 10^{-4}$ & $ 3.758\times 10^{-6}$&~   \\[1.2ex]
1.0& 150 & $4.595\times 10^{-4}$ & $ 3.105\times 10^{-6}$&$0.011937$  \\[1.2ex]
~ & 200 & $2.369\times 10^{-4}$ & $2.015\times 10^{-6}$&~   \\[1.2ex]
\hline\hline
\label{tb5} 		 		
	\end{tabular}} 
	\end{table}

\subsection{WPDVE}
In this subsection, we have proposed non-parametric and parametric estimates of the WPDVE. Below, we discuss the non-parametric estimate.
\subsubsection{Non-parametric estimation}
Similar to (\ref{eq5.2}), a kernel-based non-parametric estimate of the WPDVE in (\ref{eq4.3}) is obtained as
\begin{align}\label{eq5.6}
\widehat{\mathcal{PVE}}^y(Y;t)&=\int_{0}^{t}\widehat{\eta}(y)(y\log  (\widehat{\eta}(y)))^2dy+\int_{t}^{\infty}\widehat{\delta}(y)(y\log  (\widehat{\delta}(y)))^2dy-\left[\int_{0}^{t}y\widehat{\eta}(y)\log  (\widehat{\eta}(y))dy\right]^2\nonumber\\
&~~-\left[\int_{t}^{\infty}y\widehat{\delta}(y)\log  (\widehat{\delta}(y))dy\right]^2,
\end{align}
where $\widehat{\delta}(y)=\frac{\widehat{g}(y)}{\widehat{\bar G}(t)}$, $\widehat{\eta}(y)=\frac{\widehat g(y)}{\widehat G(t)}$,  $\widehat{\bar G}(t)=\int_{t}^{\infty}\widehat{g}(y)dy$, and $\widehat G(t)=\int_{0}^{t}\widehat g(y)dy$. 

\subsection*{Simulation study}
Similar to the preceding subsection, here a Monte-Carlo simulation study has been carried out to check the performance of the proposed kernel-based non-parametric estimate of the WPDVE given in (\ref{eq5.6}). The data set has been generated from exponential distribution with $\lambda=5$ using ``Mathematica" software. For different values of $n=100,120,150,200$ and $t=0.05,0.1,0.15,0.2$, the AB and MSE values have been computed using $500$ replications.  We have used the Gaussian kernel given in (\ref{eq5.3*}). The computed values of the AB and MSE are presented in Table  \ref{tb6}. From Table  \ref{tb6}, we notice similar observation to the case of WPVE. 

\begin{table}[ht!]
\caption {The AB, MSE, and  $\mathcal{PVE}^y(Y;t)$ for the kernel estimator of  WPDVE in (\ref{eq5.6}).}
	\centering 
	\scalebox{1}{\begin{tabular}{c c c c c c c c } 
			\hline\hline\vspace{.1cm} 
			$t$ &$n$ & AB &MSE &$\mathcal{PVE}^y(Y;t)$ \\
			\hline\hline
\multirow{10}{1.9cm}
~ & 100 & $0.15322$ & $0.04596$&~   \\[1.2ex]
~&  120 & $0.15714$ & $0.04499$&~   \\[1.2ex]
0.05& 150 & $0.13113$ & $0.03731$&$0.44062$  \\[1.2ex]
~ & 200 & $0.11894$ & $0.03376$&~   \\[1.2ex]

\hline
\multirow{10}{1.9cm}
~ & 100 & $0.19768$ & $0.06524$&~  \\[1.2ex]
~&  120 & $0.187725$ & $0.06388$&~   \\[1.2ex]
0.10& 150 & $0.16648$ & $0.05171$& $0.49772$  \\[1.2ex]
~ & 200 & $0.14099$ & $0.04369$&~   \\[1.2ex]

\hline
\multirow{10}{1.9cm}
~ & 100 & $0.23206$ & $0.08678$&~  \\[1.2ex]
~&  120 & $0.20336$ & $0.07177$&~   \\[1.2ex]
0.15& 150 & $0.20001$ & $0.06897$&$0.55890$  \\[1.2ex]
~ & 200 & $0.18718$ & $0.06399$&~   \\[1.2ex]

\hline
\multirow{10}{1.9cm}
~ & 100 & $0.26717$ & $0.11190$&~  \\[1.2ex]
~&  120 & $0.26187$ & $0.10100$&~   \\[1.2ex]
0.20& 150 & $0.23457$ & $0.09309$ & $0.62414$  \\[1.2ex]
~ & 200 & $0.22166$ & $0.07981$&~   \\[1.2ex]
\hline\hline
\label{tb6} 		 		
	\end{tabular}} 
	\end{table}

\subsubsection{Parametric estimation}     
Consider an exponential population with mean $1/\lambda,$ $\lambda>0.$ The WPDVE of the exponential distribution is 
  \begin{align}\label{eq5.7}
  \mathcal{PVE}^y(Y;t)&=\int_{0}^{t}y^2\frac{\lambda e^{-\lambda y}}{ 1-e^{-\lambda t}}\bigg(\log\Big(\frac{\lambda e^{-\lambda y}}{ 1-e^{-\lambda t}}\Big)\bigg)^2dy-\int_{0}^{t}y\frac{\lambda e^{-\lambda y}}{ 1-e^{-\lambda t}}\log\Big(\frac{\lambda e^{-\lambda y}}{ 1-e^{-\lambda t}}\Big)dy\nonumber\\
  &+\int_{t}^{\infty}\lambda e^{(t-y)\lambda}\Big(y\log(\lambda e^{(t-y)\lambda})\Big)^2dy-\int_{t}^{\infty}\lambda y e^{(t-y)\lambda}\log(\lambda e^{(t-y)\lambda})dy.
  \end{align}  
 Firstly, we  estimate the model parameter $\lambda$ using maximum likelihood estimation technique  for estimating (\ref{eq5.7}). The MLE of $\mathcal{PVE}^y(Y;t)$ in (\ref{eq5.7}) is
 \begin{align}\label{eq5.8}
 \widetilde{\mathcal{PVE}}^y(Y;t)=&=\int_{0}^{t}y^2\frac{\hat\lambda e^{-\hat\lambda y}}{ 1-e^{-\hat\lambda t}}\bigg(\log\Big(\frac{\hat\lambda e^{-\hat\lambda y}}{ 1-e^{-\hat\lambda t}}\Big)\bigg)^2dy-\int_{0}^{t}y\frac{\hat\lambda e^{-\hat\lambda y}}{ 1-e^{-\hat\lambda t}}\log\Big(\frac{\hat\lambda e^{-\hat\lambda y}}{ 1-e^{-\hat\lambda t}}\Big)dy\nonumber\\
   &+\int_{t}^{\infty}\hat\lambda e^{(t-y)\hat\lambda}\Big(y\log(\hat\lambda e^{(t-y)\hat\lambda})\Big)^2dy-\int_{t}^{\infty}\hat\lambda y e^{(t-y)\hat\lambda}\log(\hat\lambda e^{(t-y)\hat\lambda})dy,
 \end{align}
 where $\hat \lambda$ is the MLE of $\lambda$.  To evaluate the performance of the proposed parametric estimate,  Monte-Carlo simulation is conducted using $R$ software with $500$ replications. Here, we consider the true value of $\lambda$ as $5$. For sample sizes $n=100, 120, 150$ and $200$, the AB and MSE values have been presented in Table \ref{tb7} for different choices of $t$ . From Table \ref{tb7}, we observe that  MSEs decrease as $n$ increases, which assures the consistency and validation of the propose estimate $ \widetilde{\mathcal{PVE}}^y(Y;t)$ in (\ref{eq5.8}).
 
  From Table \ref{tb6} and Table \ref{tb7}, we observe that the parametric estimate in (\ref{eq5.8}) performs better than the non-parametric estimate in (\ref{eq5.6}) when the data are generated from  exponential distribution with $\lambda=5$ in terms of the AB and MSE.

 \begin{table}[ht!]
 \caption {The AB, MSE, and $\mathcal{PVE}^y(Y;t)$ for parametric  estimate of  WPDVE in (\ref{eq5.8}).}
 	\centering 
 	\scalebox{1}{\begin{tabular}{c c c c c c c c } 
 			\hline\hline\vspace{.1cm} 
 			$t$ &$n$ & AB &MSE &$\mathcal{PVE}^y(Y;t)$ \\
 			\hline\hline
 \multirow{10}{1.9cm}
 ~ & 100 & $ 0.00884$ & $0.01091$&~   \\[1.2ex]
 ~&  120 & $ 0.00923$ & $0.00927$&~   \\[1.2ex]
 0.05& 150 & $0.00814$ & $0.00712$&$0.44062$  \\[1.2ex]
 ~ & 200 & $0.00618$ & $0.00526$&~   \\[1.2ex]
 
 \hline
 \multirow{10}{1.9cm}
 ~ & 100 & $0.00905$ & $0.01236$&~  \\[1.2ex]
 ~&  120 & $0.00953$ & $0.01049$&~   \\[1.2ex]
 0.10& 150 & $0.00843$ & $0.00807$& $0.49772$  \\[1.2ex]
 ~ & 200 & $0.00641$ & $0.00597$&~   \\[1.2ex]
 
 \hline
 \multirow{10}{1.9cm}
 ~ & 100 & $ 0.00927$ & $0.01390$&~  \\[1.2ex]
 ~&  120 & $0.00982$ & $0.01181$&~   \\[1.2ex]
 0.15& 150 & $0.00872$ & $0.00909$&$0.55890$  \\[1.2ex]
 ~ & 200 & $0.00663$ & $0.00672$&~   \\[1.2ex]
 
 \hline
 \multirow{10}{1.9cm}
 ~ & 100 & $0.00948$ & $0.01554$&~  \\[1.2ex]
 ~&  120 & $0.01012$ & $0.01319$&~   \\[1.2ex]
 0.20& 150 & $0.00901$ & $0.01017$ & $0.62414$  \\[1.2ex]
 ~ & 200 & $0.00686$ & $0.00752$&~   \\[1.2ex]
 \hline\hline
 \label{tb7} 		 		
 	\end{tabular}} 
 	\end{table}

 \section{Application in reliability engineering} \label{sec6}
  In reliability engineering, a coherent system is a model used to analyse the performance and reliability of systems composed of multiple components. The key idea is to understand how the configuration and interdependence of components affect the overall system reliability. This allows engineers to analyse how the failure or success of components impacts the entire system. For instance, in a series system, failure of any single component leads to the failure of the whole system, while in a parallel system, the system continues to operate as long as at least one component is functioning.
  
 We consider a coherent system with $n$ components and lifetime of the coherent system is
 denoted by $T$. The random lifetimes of $n$ components of the coherent system are identically distributed (i.d.) with a common CDF and PDF $G(\cdot)$ and $g(\cdot)$, respectively. The CDF and PDF of the coherent system with lifetime $T$ are defined as
 \begin{align}
 G_T(y)=q(G(x)) ~~~ \text{and}~~~ g_T(y)=q^{\prime}(G(x))g(y),
 \end{align}
respectively, where $q:[0,1]\rightarrow[0,1]$ is a distortion function (see \cite{navarro2013stochastic}) and $q^{\prime}\equiv\frac{dq}{dy}$.  The distortion function which is increasing and continuous function,  depends on the structure of a system and the copula of the component lifetimes and $q(0)=0,~q(1)=1$. Several researchers discussed the coherent system for different information measures as an application, one may refer to \cite{toomaj2017some}, \cite{cali2020properties} and \cite{saha2024weighted}. The WPVE of $T$ is defined by
\begin{align}\label{eq6.2}
\overline{\mathcal{VE}}^y(T)&=\int_{0}^{t}\phi(G_T(y))dy-\bigg(\int_{0}^{t}\psi(G_T(y))dy\bigg)^2\nonumber\\
&=\int_{0}^{G(t)}\frac{\phi\big(q(u)\big)}{g\big(G^{-1}(u)\big)}du-\bigg(\int_{0}^{G(t)}\frac{\psi\big(q(u)\big)}{g\big(G^{-1}(u)\big)}du\bigg)^2,~~~u=G(y)
\end{align}
where $$\phi\big(q(u)\big)=\frac{g_T(G_T^{-1}(q(u))}{G_T(t)}\Big[G_T^{-1}(u)\log \Big(\frac{g_T(G_T^{-1}(q(u)))}{G_T(t)}\Big)\Big]^2$$ and $$\psi\big(q(u)\big)=G_T^{-1}(u)\frac{g_T(G_T^{-1}(q(u)))}{G_T(t)}\log \Big(\frac{g_T(G_T^{-1}(q(u)))}{G_T(t)}\Big).$$  
  
Next, we explore an example of the WPVE of a coherent system for illustration purpose.
\begin{example}\label{ex6.1}
Suppose $Y_1$, $Y_2$ and $Y_3$ denote the lifetimes of the components of a coherent
system. Assume that they all follow power distribution with CDF $G(y)=y^\beta,~y\in[0,1]$ and $\beta>0$.
 We consider a parallel system with lifetime $T = X_{3:3} = \max\{Y_1, Y_2, Y_3\}$ whose distortion
function is $q(v) = v^3, ~0\le v \le 1$. Thus, from (\ref{eq6.2}), the WPVE of the coherent system is obtained as
\begin{align}\label{eq6.3}
\overline{\mathcal{VE}}^y(T)&=\frac{3\beta t^2}{(2+3\beta)^3}\bigg[\Big\{(3\beta+2)\log\Big(\frac{3\beta}{t^{3\beta}}\Big)-3\beta+1\Big\}^2+(3\beta-1)^2\Big\{(3\beta+2)\log(t)-1\Big\}^2\nonumber\\
&+2(3\beta+2)^2(3\beta-1)\log\Big(\frac{3\beta}{t^{3\beta}}\Big)\log(t)\bigg]-\frac{81\beta^2t^\frac{2}{3}}{(9\beta+1)^4}\bigg\{(9\beta+1)\log\Big(\frac{3\beta}{t^{3\beta}}\Big)\nonumber\\
&+3(3\beta-1)\Big(\log(t^{3\beta+\frac{1}{3}})-1\Big)\bigg\}^2. 
\end{align}
The graphical presentation of WPVE for parallel system in (\ref{eq6.3}) is given in Figure \ref{fig7} with respect to $t$ (when $\beta$ is fixed) and $\beta$ (when $t$ is fixed).
\end{example} 

\begin{figure}[h!]
	\centering
	\subfigure[]{\label{c1}\includegraphics[height=2in]{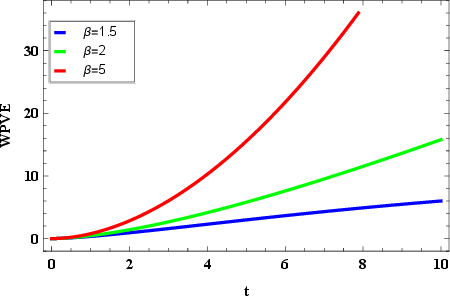}}
	\subfigure[]{\label{c1}\includegraphics[height=2in]{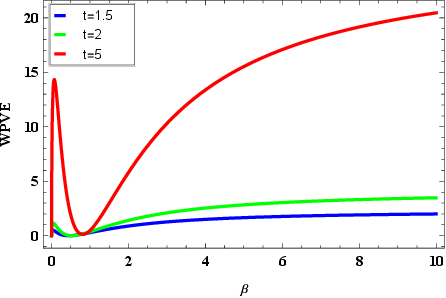}}
	\caption{ Plots of the WPVE  for parallel system with three system in Example \ref{ex6.1} $(a)$  with respect to $t$ for $\beta=1.5,2$ and $5$ and $(b)$ with respect to $\beta$ for $t=1.5,2$ and $5$.}
	\label{fig7}	
\end{figure}

Now, the relation between the WPVE of coherent system and component has been established in the following result.
\begin{proposition}\label{prop6.1}
Suppose $T$ is the lifetime of a coherent system with identically distributed components. The component lifetime is $Y$ with CDF $G(\cdot)$ and PDF $g(\cdot)$ and the CDF and PDF of $T$ are $G_T(\cdot)$ and $g_T(\cdot)$, respectively, and $q(\cdot)$ denotes the distortion function. Assume that $$\phi\big(q(u)\big)=\frac{g_T(G_T^{-1}(q(u))}{G_T(t)}\Big[G_T^{-1}(u)\log \Big(\frac{g_T(G_T^{-1}(q(u)))}{G_T(t)}\Big)\Big]^2$$ and $$\psi\big(q(u)\big)=G_T^{-1}(u)\frac{g_T(G_T^{-1}(q(u)))}{G_T(t)}\log \Big(\frac{g_T(G_T^{-1}(q(u)))}{G_T(t)}\Big),$$ for all $0\le u\le G(t).$ If $\phi(q(u))\ge (\le) \phi(u)$ and  $\psi(q(u))\le (\ge) \psi(u)$, for all $0\le u\le G(t),~t>0$, then
\begin{align}
\overline{\mathcal{VE}}^y(T)\ge (\le)\overline{\mathcal{VE}}^y(Y).
\end{align} 
\end{proposition}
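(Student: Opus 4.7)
The plan is to compare $\overline{\mathcal{VE}}^y(T)$ and $\overline{\mathcal{VE}}^y(Y)$ directly through the integral representation already worked out in (6.2). The first step is to notice that (6.2) applies equally well to the component $Y$ itself by specialising to the trivial distortion $q(u)=u$: the system reduces to a single component, $G_T = G$, $g_T = g$, and (6.2) becomes
\begin{equation*}
\overline{\mathcal{VE}}^y(Y) = \int_0^{G(t)} \frac{\phi(u)}{g(G^{-1}(u))}\,du - \left(\int_0^{G(t)} \frac{\psi(u)}{g(G^{-1}(u))}\,du\right)^{2},
\end{equation*}
which matches the definition (2.1) of WPVE with $\omega(y)=y$ after the substitution $u=G(y)$. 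Thus both WPVEs live as differences of two integrals against the common positive weight $1/g(G^{-1}(u))$ on the common interval $[0,G(t)]$, and the whole proof reduces to a termwise comparison.

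Next I would use the two pointwise hypotheses in the obvious way. Because $g(G^{-1}(u))>0$, the assumption $\phi(q(u))\ge \phi(u)$ for all $u\in[0,G(t)]$ transfers directly to the non-squared integrals, giving
\begin{equation*}
\int_0^{G(t)} \frac{\phi(q(u))}{g(G^{-1}(u))}\,du \;\ge\; \int_0^{G(t)} \frac{\phi(u)}{g(G^{-1}(u))}\,du.
\end{equation*}
Analogously, $\psi(q(u))\le \psi(u)$ yields an ordering of the corresponding signed integrals; one then needs to pass from that ordering to the reverse ordering of their squares, which combined with the first inequality delivers $\overline{\mathcal{VE}}^y(T)\ge \overline{\mathcal{VE}}^y(Y)$. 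The bracketed case follows by reversing every inequality symbol.

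The main obstacle is the squaring step for the $\psi$-term: a pointwise inequality $\psi(q(u))\le\psi(u)$ gives $I_T:=\int \psi(q(u))/g(G^{-1}(u))\,du \le I_Y :=\int \psi(u)/g(G^{-1}(u))\,du$, but $I_T^{2}\le I_Y^{2}$ requires in addition that $|I_T|\le|I_Y|$, i.e.\ that $I_T$ and $I_Y$ lie on the same side of $0$. This is not automatic from the raw hypothesis, so the careful part of the proof is arguing sign consistency. I would do this by noting that $\int \psi(\cdot)/g(G^{-1}(u))\,du$ is, up to a sign, a weighted past entropy-type quantity (indeed it equals $-\overline{\mathcal{H}}^{y}(\,\cdot\,;t)$ after the change of variables), which over the admissible range of $t$ has a definite sign; once that is established, the square preserves the required ordering and the combination of the two termwise inequalities closes the argument.
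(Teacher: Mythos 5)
Your route is the same as the paper's: both proofs specialise the representation (6.2) to the identity distortion to write $\overline{\mathcal{VE}}^y(Y)$ as a difference of integrals against the common positive weight $1/g(G^{-1}(u))$ on $[0,G(t)]$, integrate the two pointwise hypotheses termwise, and combine; the paper's proof consists of exactly those two integrated inequalities followed by the claim that the conclusion ``easily'' follows. Where you differ is that you explicitly flag the squaring step, and you are right to: writing $I_T=\int_{0}^{G(t)}\psi(q(u))/g(G^{-1}(u))\,du$ and $I_Y$ for the analogous integral with $q(u)=u$, the hypothesis only yields $I_T\le I_Y$, whereas the conclusion needs $I_T^2\le I_Y^2$, i.e.\ $|I_T|\le |I_Y|$. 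The paper passes over this in silence, so your observation exposes a gap in the published argument, not just in your own.

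Your proposed repair, however, does not hold as stated. The integral $I_T$ equals $-\overline{\mathcal{H}}^y(T;t)$, and the weighted past Shannon entropy does not have a definite sign over the admissible range: the paper's own Table 8 reports a positive WPSE for the series and 2-out-of-3 systems but a negative WPSE for the parallel system under the same baseline distribution, so sign consistency between $I_T$ and $I_Y$ cannot be assumed. If $I_T\le I_Y\le 0$, the squaring reverses and the argument fails. Closing the proof genuinely requires an additional hypothesis guaranteeing $|I_T|\le |I_Y|$ (for instance $0\le I_T\le I_Y$, equivalently that both weighted past entropies are nonpositive with $\overline{\mathcal{H}}^y(Y;t)\le \overline{\mathcal{H}}^y(T;t)$). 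As written, both your sketch and the paper's proof establish the ordering of the first integrals and of the un-squared second integrals, but neither legitimately passes to the squares.
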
  
\begin{proof}
Take $\phi(q(u))\ge  \phi(u)$ and  $\psi(q(u))\le \psi(u)$, for all $0\le u\le G(t),~t>0$. Then, we obtain
\begin{align}\label{eq6.5}
\int_{0}^{G(t)}\frac{\phi(q(u))}{g(G^{-1}(u))}du\ge \int_{0}^{G(t)}\frac{\phi(u)}{g(G^{-1}(u))}du
\end{align}
and 
\begin{align}\label{eq6.6}
\int_{0}^{G(t)}\frac{\psi(q(u))}{g(G^{-1}(u))}du\le \int_{0}^{G(t)}\frac{\psi(u)}{g(G^{-1}(u))}du.
\end{align}
Using (\ref{eq6.5}) and (\ref{eq6.6}), we easily obtain that $\overline{\mathcal{VE}}^y(T)\ge \overline{\mathcal{VE}}^y(Y).$ The other part of the proof is similar, therefore omitted for the brevity. Hence, the result is made.
\end{proof}  
  
 The upper bound of the WPVE of coherent system in terms of the weighted past SE and CRHR is established. 
\begin{proposition}
Consider a coherent system as in Proposition \ref{prop6.1}. Denote $\sup_{u\in[0,G(t)]}\frac{\phi(q(u))}{\phi(u)}=\eta_{1,u}$, where  $$\phi\big(q(u)\big)=\frac{g_T(G_T^{-1}(q(u))}{G_T(t)}\Big[G_T^{-1}(u)\log \Big(\frac{g_T(G_T^{-1}(q(u)))}{G_T(t)}\Big)\Big]^2$$ and $$\psi\big(q(u)\big)=G_T^{-1}(u)\frac{g_T(G_T^{-1}(q(u)))}{G_T(t)}\log \Big(\frac{g_T(G_T^{-1}(q(u)))}{G_T(t)}\Big).$$ Then, under the condition in (\ref{eq2.3}), we obtain
\begin{align}
\overline{\mathcal{VE}}^y(T)\le\frac{\eta_{1,u}}{G(t)}\overline{\mathcal{H}}^{\omega_2}(Y;t)+\big(\Lambda^{*}(t)\big)^2E[Y^2|Y\le t],
\end{align}
where $\overline{\mathcal{H}}^{\omega_2}(Y;t)$ is weighted past SE with weight $\omega_2(y)=\alpha y+\beta y^2$.
\end{proposition}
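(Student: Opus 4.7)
The approach mirrors the derivation of Theorem \ref{th2.1}, leveraging the integral representation in (\ref{eq6.2}) together with the supremum hypothesis. My first move would be to exploit the trivial inequality that the variance-type expression is bounded above by its second-moment part: since $\left(\int_{0}^{G(t)}\psi(q(u))/g(G^{-1}(u))\,du\right)^{2}\ge 0$, dropping it from (\ref{eq6.2}) yields
\[
\overline{\mathcal{VE}}^{y}(T)\le\int_{0}^{G(t)}\frac{\phi(q(u))}{g(G^{-1}(u))}\,du.
\]
Then I would apply the defining bound $\phi(q(u))\le\eta_{1,u}\,\phi(u)$ pointwise on $[0,G(t)]$, pull $\eta_{1,u}$ out of the integral, and reverse the change of variables $u=G(y)$ to rewrite the right-hand side as
\[
\eta_{1,u}\int_{0}^{t}\frac{g(y)}{G(t)}\,y^{2}\left[\log\!\left(\frac{g(y)}{G(t)}\right)\right]^{2}dy,
\]
which is precisely the second-moment piece appearing in the expansion of $\overline{\mathcal{VE}}^{y}(Y;t)$ in (\ref{eq2.1}).

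The remaining task is to bound this last integral from above in the style of the proof of Theorem \ref{th2.1}. Writing $\log(g(y)/G(t))=\log g(y)+\Lambda^{*}(t)$ and expanding the square produces three terms: a pure $[\log g(y)]^{2}$ term, a cross term $2\Lambda^{*}(t)\log g(y)$, and the constant term $[\Lambda^{*}(t)]^{2}$. Invoking (\ref{eq2.3}) so that $-\log g(y)\in[0,\alpha y+\beta]$, one has $[\log g(y)]^{2}\le-(\alpha y+\beta)\log g(y)$, which, after pairing with the factor $y^{2}$, converts the first contribution into a weighted past entropy with weight $\omega_{2}$; the $[\Lambda^{*}(t)]^{2}$ piece integrates to $(\Lambda^{*}(t))^{2}E[Y^{2}\mid Y\le t]$; and the cross term is non-positive (since $g\le1$ and $\Lambda^{*}(t)\ge 0$), so it can be discarded without weakening the direction of the inequality.

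The main obstacle I anticipate is the bookkeeping in this last step: one must track how the multiplicative $y^{2}$ from the squared information content combines with the $(\alpha y+\beta)$ factor from (\ref{eq2.3}) so that the resulting integrand matches the weight $\omega_{2}$ used in the statement, and then recognize the $\frac{1}{G(t)}\,\overline{\mathcal{H}}^{\omega_{2}}(Y;t)$ factor through the identification $-\int_{0}^{t}\omega_{2}(y)\frac{g(y)}{G(t)}\log(g(y)/G(t))\,dy$. Verifying that no spurious first-order $\Lambda^{*}(t)$ term survives, and that the cross term can indeed be dropped rather than absorbed into the weighted past entropy, is the delicate point where a careless calculation would miss the clean closed form stated in the Proposition.
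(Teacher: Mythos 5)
Your proposal follows essentially the same route as the paper's proof: drop the non-negative squared-mean term from (\ref{eq6.2}), apply the pointwise bound $\phi(q(u))\le\eta_{1,u}\phi(u)$, undo the substitution $u=G(y)$, expand $\log\!\big(g(y)/G(t)\big)$, discard the non-positive cross term, and convert $[\log g(y)]^2$ into the weighted past entropy via (\ref{eq2.3}). The only caveat (present in the paper's own final line as well) is that the factor $\eta_{1,u}$ multiplies the entire integral, so strictly it should also appear in front of the $\big(\Lambda^{*}(t)\big)^2E[Y^2\mid Y\le t]$ term, and the weight actually produced is $\alpha y^3+\beta y^2$ rather than the $\alpha y+\beta y^2$ quoted in the statement.
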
  
  
\begin{proof}
From (\ref{eq6.2}) and using (\ref{eq2.3}), we obtain
\begin{align}\label{eq6.8}
\overline{\mathcal{VE}}^y(T)
&\le\int_{0}^{G(t)}\frac{\phi\big(q(u)\big)}{g\big(G^{-1}(u)\big)}du\nonumber\\
&\le\Big(\sup_{u\in[0,G(t)]}\frac{\phi(q(u))}{\phi(u)}\Big)\int_{0}^{G(t)}\frac{\phi(u)}{g\big(G^{-1}(u)\big)}du\nonumber\\
&=\eta_{1,u}\int_{0}^{t}y^2\frac{g(y)}{G(t)}\Big(\log\Big(\frac{g(y)}{G(t)}\Big)\Big)^2dy\\
&\le\frac{\eta_{1,u}}{G(t)}\bigg[\int_{0}^{t}y^2g(y)\Big(\log\big(g(y)\big)\Big)^2dy+\Big(\log\big(G(t)\big)\Big)^2\int_{0}^{t}y^2g(y)dy\bigg]\nonumber\\
&\le\frac{\eta_{1,u}}{G(t)}\bigg[\int_{0}^{t}y^2(-\alpha y-\beta)g(y)\log\big(g(y)\big)dy+\Big(\log\big(G(t)\big)\Big)^2\int_{0}^{t}y^2g(y)dy\bigg]\nonumber\\
&=\frac{\eta_{1,u}}{G(t)}\overline{\mathcal{H}}^{\omega_2}(Y;t)+\big(\Lambda^{*}(t)\big)^2E[Y^2|Y\le t].
\end{align}
Therefore, the proof is completed.
\end{proof} 
 Next, we obtain an upper bound of the WPVE of coherent system in terms of WPVE and weighted past SE of the component.  
\begin{proposition}
Consider a coherent system as in Proposition \ref{prop6.1}. Denote $\sup_{u\in[0,G(t)]}\frac{\phi(q(u))}{\phi(u)}=\eta_{1,u}$, where  $$\phi\big(q(u)\big)=\frac{g_T(G_T^{-1}(q(u))}{G_T(t)}\Big[G_T^{-1}(u)\log \Big(\frac{g_T(G_T^{-1}(q(u)))}{G_T(t)}\Big)\Big]^2$$ and $$\psi\big(q(u)\big)=G_T^{-1}(u)\frac{g_T(G_T^{-1}(q(u)))}{G_T(t)}\log \Big(\frac{g_T(G_T^{-1}(q(u)))}{G_T(t)}\Big).$$ Then,
\begin{align*}
\overline{\mathcal{VE}}^y(T)\le\eta_{1,u}\Big\{\overline{\mathcal{VE}}^{y}(Y;t)+\big(\overline{\mathcal{H}}^{y}(Y;t)\big)^2\Big\}.
\end{align*}
\end{proposition}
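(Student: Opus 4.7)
The plan is to follow the same opening moves as in the preceding proposition's proof but then to recognize the remaining integral as the second moment of the weighted past information content, which decomposes into variance plus squared mean.

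First, I would start from the representation of $\overline{\mathcal{VE}}^y(T)$ in (\ref{eq6.2}). Since the subtracted term $\big(\int_0^{G(t)} \psi(q(u))/g(G^{-1}(u))\,du\big)^2$ is non-negative, I can immediately drop it to obtain
\begin{align*}
\overline{\mathcal{VE}}^y(T) \le \int_0^{G(t)} \frac{\phi(q(u))}{g(G^{-1}(u))}\, du.
\end{align*}
Next, I would insert the factor $\phi(u)/\phi(u)$ inside the integrand and use the definition of $\eta_{1,u}$ as $\sup_{u\in[0,G(t)]} \phi(q(u))/\phi(u)$ to pull the supremum out of the integral:
\begin{align*}
\overline{\mathcal{VE}}^y(T) \le \eta_{1,u}\int_0^{G(t)} \frac{\phi(u)}{g(G^{-1}(u))}\, du.
\end{align*}

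Then I would perform the change of variable $u = G(y)$, so that $du = g(y)\,dy$, $G^{-1}(u)=y$, and when $u=G(t)$, $y=t$. This converts the remaining integral into the explicit form
\begin{align*}
\int_0^{G(t)} \frac{\phi(u)}{g(G^{-1}(u))}\, du = \int_0^t y^2 \frac{g(y)}{G(t)}\Big(\log\Big(\tfrac{g(y)}{G(t)}\Big)\Big)^2 dy,
\end{align*}
which is exactly the second moment $E[(I^y(Y^*_t))^2]$ of the weighted information content of the past lifetime.

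Finally, using the variance decomposition $E[X^2] = \mathrm{Var}[X] + (E[X])^2$ with $X = I^y(Y^*_t)$, together with the definition (\ref{eq2.1}) of WPVE and the definition of weighted past SE, this second moment equals $\overline{\mathcal{VE}}^y(Y;t) + [\overline{\mathcal{H}}^y(Y;t)]^2$. Combining this with the previous inequality yields the claimed bound. I do not foresee a real obstacle here; the only point requiring a small amount of care is to ensure that the change of variable is carried out consistently so that $\phi(u)$ maps to the $y^2 \tfrac{g(y)}{G(t)}[\log(g(y)/G(t))]^2$ integrand, after which the second-moment identity closes the argument immediately.
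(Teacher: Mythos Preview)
Your proposal is correct and follows essentially the same route as the paper: the paper simply notes that the result follows directly from the chain of inequalities leading to (\ref{eq6.8}), and your steps (drop the squared term, extract the supremum $\eta_{1,u}$, change variables $u=G(y)$) reproduce exactly that chain, after which the identification of the remaining integral as $\overline{\mathcal{VE}}^{y}(Y;t)+[\overline{\mathcal{H}}^{y}(Y;t)]^2$ via the definition (\ref{eq2.1}) is immediate.
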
  
\begin{proof}
The proof  follows directly from (\ref{eq6.8}). Hence, we omit the proof for brevity. 
\end{proof}  
  
\begin{proposition}
Consider a coherent system in Proposition \ref{prop6.1}. Assume that the components have PDF $g(y)$ with support $S$, such that $g(y)\ge L>0 ~\forall~y\in S.$ Then, we obtain
\begin{align*}
\overline{\mathcal{VE}}^y(T)\le\frac{1}{L}\int_{0}^{G(t)}\phi\big(q(u)\big)du,
\end{align*}
where $\phi\big(q(u)\big)=\frac{g_T(G_T^{-1}(q(u))}{G_T(t)}\Big[G_T^{-1}(u)\log \Big(\frac{g_T(G_T^{-1}(q(u)))}{G _T(t)}\Big)\Big]^2$.
\end{proposition}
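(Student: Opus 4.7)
The plan is to argue directly from the representation of $\overline{\mathcal{VE}}^y(T)$ given in (\ref{eq6.2}), using only two ingredients: the non-negativity of the squared bias-type correction term and the pointwise lower bound on the component density.

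First I would start from the identity
\begin{equation*}
\overline{\mathcal{VE}}^y(T)=\int_{0}^{G(t)}\frac{\phi\bigl(q(u)\bigr)}{g\bigl(G^{-1}(u)\bigr)}du-\left(\int_{0}^{G(t)}\frac{\psi\bigl(q(u)\bigr)}{g\bigl(G^{-1}(u)\bigr)}du\right)^{2},
\end{equation*}
which is the second line of (\ref{eq6.2}) after the change of variable $u=G(y)$. Since the second term is a square, it is non-negative, so discarding it yields the upper bound
\begin{equation*}
\overline{\mathcal{VE}}^y(T)\le \int_{0}^{G(t)}\frac{\phi\bigl(q(u)\bigr)}{g\bigl(G^{-1}(u)\bigr)}du.
\end{equation*}
This is exactly the opening step used in the previous two propositions, so it fits naturally into the pattern of the section.

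Next I would observe that the integrand $\phi\bigl(q(u)\bigr)$ is pointwise non-negative on $[0,G(t)]$: it equals the density ratio $g_T(G_T^{-1}(q(u)))/G_T(t)>0$ multiplied by the square of the real number $G_T^{-1}(u)\log\bigl(g_T(G_T^{-1}(q(u)))/G_T(t)\bigr)$. Under the hypothesis $g(y)\ge L>0$ for all $y\in S$, we have $g(G^{-1}(u))\ge L$ for every $u$ in the range of $G$, hence $1/g(G^{-1}(u))\le 1/L$. Applying this pointwise estimate inside the integral gives
\begin{equation*}
\int_{0}^{G(t)}\frac{\phi\bigl(q(u)\bigr)}{g\bigl(G^{-1}(u)\bigr)}du\le \frac{1}{L}\int_{0}^{G(t)}\phi\bigl(q(u)\bigr)du,
\end{equation*}
and combining the two displays delivers the claimed bound.

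There is essentially no obstacle here; the proof is a two-line monotonicity argument. The only point deserving a brief sentence of justification is the non-negativity of $\phi(q(u))$, since the estimate $1/g(G^{-1}(u))\le 1/L$ may only be applied to an integrand of fixed sign without flipping the inequality. Once that is noted, the bound follows immediately and the proof can be kept very short, in keeping with the style of the preceding propositions in Section~\ref{sec6}.
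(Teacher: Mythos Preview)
Your proposal is correct and follows exactly the same route as the paper: drop the non-negative squared term in (\ref{eq6.2}) and then use $g(G^{-1}(u))\ge L$ to pull out the factor $1/L$. The paper's proof is even terser (one displayed chain of inequalities), but your added remark on the non-negativity of $\phi(q(u))$ is a welcome clarification that the paper leaves implicit.
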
  
 \begin{proof}
 From (\ref{eq2.1}), we have
 \begin{align*}
 \overline{\mathcal{VE}}^y(T)\le\int_{0}^{G(t)}\frac{\phi\big(q(u)\big)}{g\big(G^{-1}(u)\big)}du\le\frac{1}{L}\int_{0}^{G(t)}\phi\big(q(u)\big)du.
 \end{align*}
 Therefore, the result is made.
 \end{proof}

  Next, a comparative study is carried out between the proposed WPVE, past VE (due to \cite{buono2022varentropy}), weighted past R\'enyi entropy (due to \cite{nourbakhsh2017weighted}) and weight past SE  (due to \cite{di2007weighted}) for three different coherent systems with three components. Suppose  $T$ and $Y$ denote the  system's lifetime and component's lifetime  with PDFs $g_T(\cdot)$ and $g(\cdot)$ and CDFs $G_T(\cdot)$ and $G(\cdot)$, respectively. The weighted past SE and weighted past R\'enyi entropy of $T$ are 
  \begin{align}\label{eq6.10}
  \overline{\mathcal{H}}^y(T)=-\int_{0}^{G(t)}\frac{\psi\big(q(u)\big)}{g\big(G^{-1}(u)\big)}du,
   \end{align}
  and
  \begin{align}\label{eq6.11}
 \overline{\mathcal{H}}^y_\alpha(T)=\frac{1}{1-\alpha}\log\int_{0}^{1}\frac{\xi\big(q(u)\big)}{g\big(G^{-1}(u)\big)}du,~~\alpha>0~(\ne1),
  \end{align}
  respectively.  Further, the past VE of $T$ is 
  \begin{align}\label{eq6.12}
  \overline{\mathcal{VE}}(T)
  &=\int_{0}^{G(t)}\frac{\phi\big(q(u)\big)}{g\big(G^{-1}(u)\big)}du-\bigg(\int_{0}^{G(t)}\frac{\psi\big(q(u)\big)}{g\big(G^{-1}(u)\big)}du\bigg)^2,
  \end{align}
  where $\phi\big(q(u)\big)=\frac{g_T(G_T^{-1}(q(u))}{G_T(t)}\Big[G_T^{-1}(u)\log \Big(\frac{g_T(G_T^{-1}(q(u)))}{G_T(t)}\Big)\Big]^2$, $\xi\big(q(u)\big)=\Big(G_T^{-1}(u)\frac{g_T(G_T^{-1}(q(u))}{G_T(t)}\Big)^\alpha$ and $\psi\big(q(u)\big)=G_T^{-1}(u)\frac{g_T(G_T^{-1}(q(u)))}{G_T(t)}\log \Big(\frac{g_T(G_T^{-1}(q(u)))}{G_T(t)}\Big)$.   Here, we consider the power distribution with CDF $G(y)=x^\beta,~x>0,~\beta>0$, as a baseline distribution (component lifetime)  for illustrative purpose. We take three coherent systems: series system ($X_{1:3}$), 2-out-of-3 system ($X_{2:3}$), and parallel system ($X_{3:3}$) for evaluating the values of $\overline{\mathcal{VE}}^y(T)$ in (\ref{eq6.2}), $\overline{\mathcal{VE}}(T)$ in (\ref{eq6.12}), $\overline{\mathcal{H}}^y_\alpha(T)$ in (\ref{eq6.11}), and $\overline{\mathcal{H}}^y(T)$ in (\ref{eq6.10}). The numerical values of the WPVE, past VE, weighted past R\'enyi entropy, and weighted past SE for the series, 2-out-of-3, and parallel systems with $\alpha=1.8$, $\beta=0.2$ and $t=0.5$ are presented in Table \ref{tb4*}. As expected, from Table \ref{tb4*}, we observe that the uncertainty values of the series system are maximum; and minimum for parallel system considering all information measures, validating the proposed WPVE.

  \begin{table}[ht!]
  	\centering
  	\caption {The values of the WPVE, past varentropy (PVE), weighted past R\'enyi entropy (WPRE), and weighted past Shannon entropy (WPSE) for the series, 2-out-of-3, and parallel systems.}
  	\scalebox{1}{\begin{tabular}{ccccccc}
  			\toprule
  			\textbf{System}  & \textbf{WPVE}  & \textbf{PVE}  & \textbf{WPRE}  & \textbf{WPSE}  \\
  			\midrule
  			Series ($X_{1:3}$)  & $0.016617$ & $26.558290$  &  $8.212988$ &  $0.015058$ \\[1.2ex]
  		2-out-of-3 ($X_{2:3}$) & $0.014338$ & $4.761798$  & $4.942339$&  $0.009428$ \\[1.2ex]
  		   	Parallel ($X_{3:3}$) & $0.001315$ & $0.444444$  & $2.931252$ &  $-0.081060$ \\[1.2ex]
  		   
  			\bottomrule
  			\label{tb4*}
  	\end{tabular}}
  \end{table}

\section{Conclusions}\label{sec7}

In this work, we have introduced WPVE and discussed its various properties. Bounds of the WPVE have been obtained. Sometimes it is very tough to obtain explicit expression of the WPVE of a transformed RV. To overcome such difficulties, in this paper, we have proposed a theorem, dealing with strictly monotone transformations. We have also introduced WPVE for PRHR model and explore some properties. Several examples have been considered for the purpose of illustration of the established theoretical results. Further, we proposed WPDE and WPDVE, and studied their several properties. It is observed that the WPDVE is a generalisation of the weighted varentropy and varentropy. The effectiveness of the WPDVE under affine transformations has been investigated. Lower and upper bounds of the WPDVE are derived. Furthermore, kernel-based non-parametric estimates for the WPVE and WPDVE have been proposed.  A simulation study is caried out to see the performance of the proposed non-parametric estimates. In order to compare the non-parametric estimation method with the parametric estimation method, we have considered parametric estimation of both WPDE and WPDVE. It is noticed that the parametric estimation method provides a better result than the non-parametric estimation method in the terms of the AB and MSE values when the data are generated from exponential distribution. A real data set representing the average wind speed has been considered and analysed for the purpose of estimation of the WPVE. Finally, an application of the WPVE in reliability engineering has been provided.

\section*{Acknowledgements} The authors thank the Associate editor and  referees for all their helpful comments and suggestions, which led to the substantial improvements. 
	Shital Saha thanks the University Grants Commission (Award No. 191620139416), India, for financial assistantship to carry out this research work. The two authors thank the research facilities provided by the Department of Mathematics, National Institute of Technology Rourkela, India.


\section*{Abbreviations}
				\begin{acronym}
					\acro{RV:}{Random variable} 
				    \acro{PDF:}{Probability density function} 
				    \acro{IC:}{Information content}
					\acro{SE:}{Shannon entropy}
			    	\acro{VE:}{Varentropy}
			    	\acro{MSE:}{Mean squared error}
			    	\acro{RVE:}{Residual varentropy}
					\acro{CDF:}{Cumulative distribution function}
					\acro{PVE:}{Past varentropy}
					\acro{WVE:}{Weighted varentropy} 
					\acro{WPSE:}{Weighted past Shannon entropy}
					\acro{WRVE:}{Weighted residual varentropy}
					\acro{WPVE:}{Weighted past varentropy}
					\acro{WPRE:}{Weighted past R\'enyi entropy}
					\acro{PRHR:}{Proportional reversed hazard rate}
					\acro{WPDE:}{Weighted paired dynamic entropy}
					\acro{WPDVE:}{Weighted paired dynamic varentropy entropy}
					\acro{AB:}{Absolute  bias}
					\acro{CRHR:}{Cumulative reversed hazard rate}
			    	\acro{VPL:}{Variance past lifetime}
			    	\acro{MPL:}{Mean past lifetime}
				    \acro{MRL:}{Mean residual lifetime}
				    \acro{VRL:}{Variance residual lifetime}	
				    \acro{GMB-II:}{Gumbel-II}
				    \acro{GXE:}{Generalised X-exponential}
				    \acro{EXP:}{Exponential}
				    \acro{lnL:}{Log-likelihood criterion}
				    \acro{AIC:}{Akaikes information criterion}
				    \acro{BIC:}{Bayesian information criterion}
				    \acro{MLE:}{Maximum likelihood estimate}	
\end{acronym}

\bibliography{refference}

\end{document}